\newtheorem{theorem}{Theorem}[section]
\newtheorem{corollary}[theorem]{Corollary}
\theoremstyle{definition}
\newtheorem{remark}[theorem]{Remark}
\DeclareMathOperator{\ev}{ev}
\DeclareMathOperator{\id}{id}
\DeclareMathOperator{\Hom}{Hom}
\DeclareMathOperator{\Spec}{Spec}
\DeclareMathOperator{\spec}{spec}   
\DeclareMathOperator{\rk}{rk}       
\DeclareMathOperator{\Gr}{Gr}      
\DeclareMathOperator{\GL}{GL}      
\DeclareMathOperator{\Quot}{Quot}
\DeclareMathOperator{\Ob}{Ob}
\def\op{\textup{op}}
\def\N{\mathbb{N}}
\def\Z{\mathbb{Z}}
\def\R{\mathbb{R}}
\def\Q{\mathbb{Q}}
\def\C{\mathbb{C}}
\def\O{\mathcal{O}}
\def\ot{\otimes}
\def\vphi{\varphi}
\def\F{\mathbb{F}}
\def\lto{\longrightarrow}
\def\lmto{\longmapsto}
\def\Fun{{\F_1}}                             
\def\Gm{\mathbb{G}_m}
\def\bdu{\coprod} % Big disjoint union
\def\Sets{\textrm{$\mathcal{S}$ets}}
\def\Sh{\textrm{$\mathcal{S}$h}}
\def\Psh{\textrm{$\mathcal{P}$sh}}
\DeclareMathOperator{\Frob}{Frob}
\def\op{\textup{op}}
\def\rk{\textup{rk}}
\def\cA{\mathcal A}
\def\cB{\mathcal B}
\def\cD{\mathcal D}
\def\cF{\mathcal F}
\def\cG{\mathcal G}
\def\cM{\mathcal M}
\def\cO{\mathcal O}
\def\cT{\mathcal T}
\def\cX{\mathcal X}
\def\p{\mathfrak{p}}
\def\q{\mathfrak{q}}
\newcommand{\dtext}[1]{\emph{#1}} % Defined terms
\newcommand{\MMod}[1]{#1\textrm{-Mod}}  %Oliver: I use the command \Mod differently
\title[An overview of geometries over the field with one element]{Mapping $\Fun$-land:\\ An overview of geometries\\ over the field with one element}
\author{Javier L\'{o}pez Pe\~na}
\address{Mathematics Research Centre\\ 
Queen Mary University of London\\
Mile End Road, London E1 4NS, United Kingdom}
\email{jlopez@maths.qmul.ac.uk \textrm{(J. L\'opez Pe\~na)}}
\thanks{J. L\'opez Pe\~na was supported by the EU Marie-Curie fellowship PIEF-GA-2008-221519 at Queen Mary University of London.}
\author{Oliver Lorscheid}
\address{Max-Planck Institut f\"ur Mathematik\\
Vivatsga\ss{}e, 7. D-53111, Bonn, Germany}
\email{oliver@mpim-bonn.mpg.de \textrm{(O. Lorscheid)}}
\thanks{O. Lorscheid was supported by the Max-Planck-Institut f\"ur Mathematik in Bonn.}
\begin{document}

\begin{abstract}
 This paper gives an overview of the various approaches towards $\Fun$-geometry. In a first part, we review all known theories in literature so far, which are: Deitmar's $\Fun$-schemes, To\"en and Vaqui\'e's $\Fun$-schemes, Haran's $\F$-schemes, Durov's generalized schemes, Soul\'e's varieties over $\Fun$ as well as his and Connes-Consani's variations of this theory, Connes and Consani's $\Fun$-schemes, the author's torified varieties and Borger's $\Lambda$-schemes. In a second part, we will tie up these different theories by describing functors between the different $\Fun$-geometries, which partly rely on the work of others, partly describe work in progress and partly gain new insights in the field. This leads to a commutative diagram of $\Fun$-geometries and functors between them that connects all the reviewed theories. We conclude the paper by reviewing the second author's constructions that lead to realization of Tits' idea about Chevalley groups over $\Fun$.
 
%Namely, the category of toric varieties is contained in all of the considered $\Fun$-geometries; Deitmar's $\Fun$-schemes are contained in the $\Fun$-geometries after To\"en and Vaqui\'e, after Connes and Consani ($\Fun$-schemes), after Haran, after Durov and after Borger; affinely torified varieties define varieties over $\Fun$ after Soul\'e and Connes-Consani; a generalized torified schemes is equivalent an $\Fun$-scheme after Connes and Consani. All the considered $\Fun$-geometries and functors will be summarized in a diagram. We round up the paper by reviewing the second author's constructions that lead to realization of Tits' idea about Chevalley groups over $\Fun$.
\end{abstract}

\maketitle

\tableofcontents

%%%%%%%%%%%%%%%%%%%%%%%%%%%%%%%%%%%%%%%%%%%%%%%%%%%%%%%%%%%%%%%%%%%%%%%%%

\section*{Introduction}

The hour of birth to the field with one element was given in Jacques Tits' paper \cite{Tits1957} from 1956, in which he indicated that the analogy between the symmetric group $S_n$ and the Chevalley group $\GL_n(\F_q)$ (as observed by Robert Steinberg in \cite{Steinberg1951} in 1951) should find an explanation by interpreting $S_n$ as a Chevalley group over the ``field of characteristic one''. Though Tits' idea was not taken serious at that time, it is one of the guiding thoughts in the development of $\Fun$-geometry today. For a recent treatment of Steinberg's paper, see \cite{Soule2009}, and for an overview of what geometric objects over $\F_q$ should become if $q=1$, see \cite{Cohn2004}.

It took more than 35 years that the field of one element reoccurred in mathematical literature. In an unpublished note (\cite{KapranovUN}), Mikhail Kapranov and Alexander Smirnov developed the philosophy that the set of the $n$-th roots should be interpreted as $\F_{1^n}$, a field extension of $\Fun$ in analogy to the field extension $\F_{p^n}$ of $\F_p$. A scheme that contains the $n$-th roots of unity should be thought of as a scheme over $\F_{1^n}$. The tensor product $\F_{1^n}\otimes_\Fun\Z$ should be the group ring $\Z[\Z/n\Z]$.

In the early nineties, Christoph Deninger published his studies (\cite{Deninger1991}, \cite{Deninger1992}, \cite{Deninger1994}, et cetera) on motives and regularized determinants. In his paper \cite{Deninger1992}, Deninger gave a description of conditions on a category of motives that would admit a translation of Weil's proof of the Riemann hypothesis for function fields of projective curves over finite fields $\F_q$ to the hypothetical curve $\overline{\Spec\Z}$. In particular, he showed that the following formula would hold:
	\begin{multline*} 
		2^{-1/2}\pi^{-s/2}\Gamma(\frac s2)\zeta(s) \ = \\ 
		\frac{\det_\infty\Bigl(\frac 1{2\pi}(s-\Theta)\Bigl| H^1(\overline{\Spec\Z},\cO_\cT)\Bigr.\Bigr)}{\det_\infty\Bigl(\frac 1{2\pi}(s-\Theta)\Bigl| H^0(\overline{\Spec\Z},\cO_\cT)\Bigr.\Bigr)\det_\infty\Bigl(\frac 1{2\pi}(s-\Theta)\Bigl| H^2(\overline{\Spec\Z},\cO_\cT)\Bigr.\Bigr)} 
	\end{multline*}
where $\det_\infty$ denotes the regularized determinant, $\Theta$ is an endofunctor that comes with the category of motives and $H^i(\overline{\Spec\Z},\cO_\cT)$ are certain proposed cohomology groups. This description combines with Nobushige Kurokawa's work on multiple zeta functions (\cite{Kurokawa1992}) from 1992 to  the hope that there are motives $h^0$ (\emph{``the absolute point''}), $h^1$ and $h^2$ (\emph{``the absolute Tate motive''}) with zeta functions
	\[ 
		\zeta_{h^w}(s) \ = \ \det{}_\infty\Bigl(\frac 1{2\pi}(s-\Theta)\Bigl| H^w(\overline{\Spec\Z},\cO_\cT)\Bigr.\Bigr) 
	\]
for $w=0,1,2$. Deninger computed that $\zeta_{h^0}(s)=s/2\pi$ and that $\zeta_{h^2}(s)=(s-1)/2\pi$. It was Yuri Manin who proposed in \cite{Manin1995} the interpretation of $h^0$ as $\Spec\Fun$ and the interpretation of $h^2$ as the affine line over $\Fun$. The quest for a proof of the Riemann hypothesis was from now on a main motivation to look for a geometry over $\Fun$. Kurokawa continued his work on zeta functions in the spirit of $\Fun$-geometry in the collaboration \cite{Kurokawa2002} with Hiroyuki Ochiai and Masato Wakayama and in \cite{Kurokawa2005}. 

In 2004, Christophe Soul\'e proposed a first definition of an algebraic variety over $\Fun$  in \cite{Soule2004}, based on the observation that the extension of the base field of a scheme can be characterized by a universal property. His suggestion for a variety over $\Fun$ is an object involving a functor, a complex algebra, a scheme and certain morphisms and natural transformations such that a corresponding universal property is satisfied. Shortly after that, many different approaches to $\Fun$-geometry arose.

Anton Deitmar reinterpreted in \cite{Deitmar2005} the notion of a fan as given by Kazuya Kato in \cite{Kato1994} as a scheme over $\Fun$. He calculated zeta functions for his $\Fun$-schemes (\cite{Deitmar2006}) and showed that the $\Fun$-schemes whose base extension to $\C$ are complex varieties correspond to toric varieties (\cite{Deitmar2007}). Bertrand T\"oen and Michel Vaqui\'e associated to any symmetric monoidal category with certain additional properties a category of geometric objects. In the case of the category of sets together the cartesian product, the geometric objects are locally representable functors on monoids, which are $\Fun$-schemes in the sense of To\"en and Vaqui\'e. Florian Marty developed theory on Zariski open objects (\cite{Marty2007}) and smooth morphisms (\cite{Marty2009}) in this context.

Shai Haran (cf.\ \cite{Haran2007}) proposed using certain categories modeled over finite sets as a replacement for rings, and actually produced a candidate for the compactification $\overline{\Spec \Z}$ of $\Spec\Z$ in his framework. Nikolai Durov developed in \cite{Durov2007} an extension of classical algebraic geometry within a categorical framework that essentially implied replacing rings by a certain type of monads. As a byproduct of his theory he obtained a definition of $\Fun$ and an algebraic geometry attached to it. See \cite{Fresan2009} for a summary.

In 2008, Alain Connes, Katia Consani and Matilde Marcolli showed (\cite{Connes2008a}) that the Bost-Connes system defined in \cite{Bost1995} admits a realization as a geometric object in the sense of Soul\'e. The suggestion of Soul\'e to consider a variation of the functor in his approach and other ideas, led to Connes and Consani the variation of Soul\'e's approach as presented in \cite{Connes2008}. That paper contains a first contribution to Tits' idea of Chevalley groups over $\Fun$, namely, Connes and Consani define them as varieties over $\F_{1^2}$. Soul\'e himself wrote only later a text with his originally suggested modification. It can be found in this volume (\cite{Soule2009}).

Yuri Manin proposed in \cite{Manin2008} a notion of analytic geometry over $\Fun$. The key idea is that one should look for varieties having ``enough cyclotomic points'', an thought that relates to Kazuo Habiro's notion (\cite{Habiro2004}) of the cyclotomic completion of a polynomial ring, which finds the interpretation as the ring of analytic functions on the set of all roots of unity. Matilde Marcolli presents in \cite{Marcolli2009} an alternative model to the BC-system for the noncommutative geometry of the cyclotomic tower. For that purpose, she uses the multidimensional analogues to the Habiro ring defined by Manin, and constructs a class of multidimensional BC-endomotives. Marcolli's endomotives turn out to be closely related to $\Lambda$-rings, in the sense of \cite{Borger2008}.

Aiming at unifying the different notions of varieties over $\Fun$ (after Soul\'e and Connes-Consani) as well as establishing new examples of $\Fun$-varieties, the authors of this text introduced in \cite{LL} the notion of torified varieties. Of particular interest were Grassmann varieties, which are shown to be torified varieties and to provide candidates of $\Fun$-varieties. However, these candidates fail to satisfy the constraints of Soul\'e's and Connes-Consani $\Fun$-geometries. Independently of this work, Connes and Consani introduced in \cite{Connes2009} a new notion of scheme over $\Fun$, which simplified the previous approaches by Soul\'e and themselves by merging Deitmar's and To\"en-Vaquies viewpoints into it. We will show in this paper that this notion is closely related to torified varieties. The second author showed in \cite{Lorscheid2009} that Connes-Consani's new notion of $\Fun$-geometry is indeed suitable to realize Tits' original ideas on Chevalley groups over $\F_1$.

Another promising notion of $\Fun$-geometry was given recently by James Borger in \cite{Borger2009}, who advocates the use of $\Lambda$-ring structures as the natural descent data from $\Z$ to $\Fun$.

The aim of this paper is to give an overview of the new land of geometries over the field with one element. Firstly, we review briefly the different developments and building bricks of $\Fun$-geometries. Secondly, we tie up and lay paths and bridges between the different $\Fun$-geometries by describing functors between them. This will finally lead to a commutative diagram of $\Fun$-geometries and functors between them that can be considered as a first map of $\Fun$-land.

The paper is organized as follows. In part \ref{bricks}, we review the different notions of $\Fun$-geometries. We describe a theory in detail where it seems important to gain an insight into its technical nature, but we will refrain from a detailed treatment in favor of a rough impression where technicalities lead too far for a brief account. In these cases, we provide the reader with references to the literature where the missing details can be found.

In the order of the paper, the following approaches towards $\Fun$-geometries are reviewed: Deitmar's $\Fun$-schemes (which we call $\cM$-schemes in the following) are described in section \ref{mschemes}, To\"en and Vaqui\'e's $\Fun$-schemes in section \ref{tvschemes}, Haran's $\F$-schemes in section \ref{hschemes} and Durov's generalized schemes in section \ref{durov}. In section \ref{svarietiesandco}, we give a taste of Soul\'e's definition of a variety over $\Fun$ and describe his and Connes-Consani's variations on this notion. In section \ref{ccschemes}, we present Connes and Consani's new proposal of $\Fun$-geometry including the notion of an $\cM_0$-scheme. In section \ref{torifsch}, we review the authors' definition of a torified variety. Finally, we give in section \ref{lambdaschemes} an insight into Borger's $\Lambda$-schemes.

In part \ref{bridges}, we review and construct functors between the categories introduced in the first part of the paper. The very central objects of $\Fun$-geometries are toric varieties. As we will see, these can be realized in all considered $\Fun$-geometries. We begin in section \ref{torictom} with recalling the definition of a toric variety and following Kato (\cite{Kato1994}) to show that toric varieties are $\Fun$-schemes after Deitmar, i.e.\ $\cM$-schemes. In section \ref{mtotv} we describe Florian Marty's work on comparing Deitmar's and To\"en-Vaqui\'e's notions of $\Fun$-schemes. 

In section \ref{mtocc}, we lay the path from $\cM$-schemes to $\cM_0$-schemes and Connes-Consani's $\Fun$-schemes. In section \ref{mtoh}, we recall from Haran's paper \cite{Haran2007} that $\cM$-schemes and $\cM_0$-schemes are $\F$-schemes. In section \ref{mtodurov}, we see that the same result holds true for Durov's generalized schemes (with $0$) in place of $\F$-schemes. In section \ref{htodurov}, we review Peter Arndt's work in progress about comparing Haran's and Durov's approaches towards $\Fun$-geometry. In section \ref{mtolambda}, we refer to Borger's paper \cite{Borger2009} to establish $\cM$-schemes as $\Lambda$-schemes. 

In section \ref{torictotorified}, we recall from the authors' previous paper \cite{LL} that toric varieties are affinely torified. In section \ref{torifiedtos}, we give an idea of why affinely torified varieties define varieties over $\Fun$ (after Soul\'e resp.\ Connes and Consani). In section \ref{torifiedtocc}, we extend the definition of a torified variety to a generalized torified scheme in order to show that the idea behind the notion of a torified variety and an $\Fun$-scheme after Connes and Consani are the same. All these categories and functors between them will be summarized in the diagram in Figure \ref{bigpicture} of section \ref{map}. 

We conclude the paper with a review of the realization of Tits' ideas on Chevalley groups over $\F_1$ by the second author in section \ref{algebraic_groups}.

{\bf Acknowledgments:} The authors thank the organizers and participants of the Nashville conference on $\Fun$-geometry for a very interesting event and for numerous inspiring discussions that finally lead to an overview as presented in the present text. In particular, the authors thank Peter Arndt, Pierre Cartier, Javier Fres\'an, Florian Marty, Andrew Salch and Christophe Soul\'e for useful discussions and openly sharing the pieces of their unpublished works with us. The authors thank Susama Agarwala, Snigdhayan Mahanta, Jorge Plazas and Bora Yakinoglu for complementing the authors' research on $\Fun$ with many other valuable aspects.

\section{The building bricks of $\Fun$-geometries}\label{bricks}
	In this first part of the paper, we present an introduction to several different approaches to $\Fun$-geometry. Some of the original definitions have been reformulated in order to unify notation, simplify the exposition or make similarities with other notions  apparent. In what follows--unless explicitly mentioned otherwise--all rings will be assumed to be commutative rings with $1$. Monoids are commutative semi-groups with $1$ and will be written multiplicatively. Schemes are understood to be schemes over $\Z$, and by variety, we will mean a reduced scheme of finite type.

	%% Deitmar's
	\subsection{$\mathcal{M}$-schemes after Kato and Deitmar}\label{mschemes}
	In \cite{Deitmar2005}, Deitmar proposes the definition of a geometry over the field with one element by following ideas of Kato (cf.\ \cite{Kato1994}) of mimicking classical scheme theory but using the category $\cM$ of commutative monoids (abelian multiplicative semigroups with 1) in place of the usual category of (commutative and unital) rings. To a far extent, this idea leads to a theory that is formally analogous to algebraic geometry.

	Given a monoid $A$, an \dtext{ideal} of $A$ is a subset $\mathfrak{a}$ such that $\mathfrak{a}A\subseteq \mathfrak{a}$. An ideal $\mathfrak{a}$ is \dtext{prime} if whenever $xy\in \mathfrak{a}$, then either $x\in \mathfrak{a}$ or $y\in \mathfrak{a}$ (or equivalently, if the set $A\setminus\mathfrak{a}$ is a submonoid of $A$). The set $\spec A$ of all the prime ideals of the monoid $A$ can be endowed with the \dtext{Zariski topology}, for which a set $V\subseteq \spec A$ is closed if there is an ideal $\mathfrak{a}\subseteq A$ such that $V = V(\mathfrak{a}) = \{\mathfrak{q}\in \spec A|\ \mathfrak{a}\subseteq\mathfrak{q}\}$. A commutative monoid always contains a \dtext{minimal prime ideal}, the empty set, and a \emph{unique} \dtext{maximal prime ideal}, $\mathfrak{m}_A := A\setminus A^{\times}$, where $A^{\times}$ is the group of units of $A$. For any submonoid $S\subseteq A$ the \dtext{localization of $A$ at $S$} is defined by $S^{-1}A = \{\frac{a}{s}|\ a\in A, s\in S\}/\sim$, where $a/s \sim b/t$ if there exists some $u\in S$ such that $uta = usb$. The localization $\Quot A:= A^ {-1}A$ is called the \dtext{total fraction monoid} of $A$. The monoid $A$ is \dtext{integral} (or \dtext{cancellative}) if the natural map $A \to \Quot A$ is injective. For each prime ideal $\mathfrak{p}\subseteq A$  we construct the \dtext{localization at $\mathfrak{p}$} as $A_{\mathfrak{p}} := (A\setminus \mathfrak{p})^{-1} A$.

	Let $X= \spec A$ be endowed with the Zariski topology. Given an open set $U\subseteq X$, define
	\[
		\O_X(U) := \left\{s:U \to \bdu_{\mathfrak{p}\in U} A_{\mathfrak{p}}|\ s(\mathfrak{p})\in A_{\mathfrak{p}}\ \text{and $s$ is loc. a quotient of elements in $A$}   \right\}
	\]
	where $s$ is locally a quotient of elements in $A$ if $s(\mathfrak{q})=a/f$ for some $a,f \in A$ with $f\notin \mathfrak{q}$ for all $\mathfrak{q}$ in some neighborhood of $\mathfrak{p}$. We call $\O_X$ the \dtext{structure sheaf} of $X$. The \dtext{stalks} are $\O_{X,\mathfrak{p}} := \varinjlim_{\mathfrak{p}\in U} \O_X(U)\cong A_{\mathfrak{p}}$. Taking global sections yields $\O_X(X) = \Gamma(\spec A, \O_X) \cong A$.

	A \dtext{monoidal space} is a pair $(X,\O_X)$ where $X$ is a topological space and $\O_X$ is a sheaf of monoids. A \dtext{morphism of monoidal spaces} is a pair $(f,f^\#)$ where $f:X\to Y$ is a continuous map and $f^\#:\O_Y\to f_{\ast}\O_X$ is a morphism of sheaves. The morphism $(f,f^\#)$ is \dtext{local} if for each $x\in X$, we have $(f_x^\#)^{-1}(\O_{X,x}^{\times}) = \O_{Y,f(x)}^{\times}$. For every monoid $A$ and $X=\spec A$, the pair $(X, \O_X)$ is a monoidal space, called an \dtext{affine $\cM$-scheme}, and every morphism of monoids $\vphi:A \to B$ induces a local morphism between the corresponding monoidal spaces. A monoidal space $(X,\O_X)$ is called a \dtext{$\cM$-scheme} (over $\Fun$) if for all $x\in X$ there is an open set $U$ containing $x$ such that $(U,{\O_X}_{|U})$ is an affine $\cM$-scheme.

Let $\Z[A]$ denote the semi-group ring of $A$. The base extension functor $-\ot_{\Fun}\Z$ that sends $\spec A$ to $\Spec \mathbb Z[A]$ has a right-adjoint given by the forgetful functor from rings to monoids (\cite[Theorem 1.1]{Deitmar2005}). Both functors extend to functors between $\cM$-schemes and schemes over $\mathbb Z$ (\cite[section 2.3]{Deitmar2005}). We will often write $X_\Z$ for $X\otimes_{\F_1}\Z$.

%%%%%%%%%%%%%%%%%%%%%%%%%%%%%%%%%%%%%
	
	\subsection{Schemes over $\Fun$ in the sense of To\"en and Vaqui\'e}
\label{tvschemes}

To\"en and Vaqui\'e introduce in their paper \cite{Toen2008} $\Fun$-schemes as functors on monoids that can be covered by representable functors. To avoid confusion with the other notions of $\Fun$-schemes in the present text, we will call the $\Fun$-schemes after To\"en and Vaqiu\'e \emph{``TV-schemes''}. We will explain the notion of a TV-scheme in this section.

As in the previous section, we let $\cM$ be the category of monoids. Then the \emph{category of affine TV-schemes} is by definition the opposite category $\cM^\op$. If $A$ is a monoid in $\cM$, then we write $\Spec_{TV} A$ for the same object in the opposite category. The category of presheaves $\Psh(\cM^\op)$ on $\cM^\op$ consists of functors $\cM^\op\to\Sets$ as objects and natural transformations between them as morphisms. The affine TV-scheme $X=\Spec_{TV} A$ can be regarded as a presheaf $X:\cM^\op\to\Sets$ by sending $Y=\Spec_{TV} B$ to $X(Y)=\Hom(Y,X)=\Hom(A,B)$. This defines an embedding of categories $\cM^\op\hookrightarrow\Psh(\cM^\op)$.

For a monoid $A$, let $\MMod{A}$ be the category of sets with $A$-action together with equivariant maps. A homomorphism $f:A\to B$ of monoids is \emph{flat} if the induced functor
	\[
		-\otimes_AB: \ \MMod{A} \ \longrightarrow \ \MMod{B} 
	\]
commutes with finite limits.

Let $X=\Spec_{TV} A$ and $Y=\Spec_{TV} B$. A morphism $\varphi:Y\to X$ in $\cM^\op$ is called \emph{Zariski open} if the dual morphism $f: A\to B$ is an flat epimorphism of finite presentation. A \emph{Zariski cover} of $X=\Spec_{TV} A$ is a collection $\{X_i\to X\}_{i\in I}$ of Zariski open morphisms in $\cM^\op$ such that there is a finite subset $J\subset I$ with the property that the functor
	\[
		\prod_{j\in J} -\otimes_AA_j: \ \MMod{A} \ \longrightarrow \ \prod_{j\in J} \MMod{A_j}
	\]
is conservative. This defines the \emph{Zariski topology} on $\Spec A$. The category $\Sh(\cM^\op)$ of sheaves is the full subcategory of $\Psh(\cM^\op)$ whose objects satisfy the sheaf axiom for the Zariski topology. To\"en and Vaqui\'e show that $\Spec A$ is indeed a sheaf for every monoid $A$.

Let $\cF:\cM^\op\to\Sets$ be a subsheaf of an affine TV-scheme $X=\Spec A$. Then $F\subset X$ is \emph{Zariski open} if there exists a family $\{X_i\to X\}_{i\in I}$ of Zariski open morphisms in $\cM^\op$ such that $\cF$ is the direct image of 
$$\coprod_{i\in I} X_i\to X$$
as a sheaf. Let $\cF$ be a subsheaf of a sheaf $\cG:\cM^\op\to\Sets$. Then $\cF\subset \cG$ is \emph{Zariski open} if for all affine TV-schemes $X=\Spec A$, the natural transformation $\cF\times_\cG X\to X$ is a monomorphism in $\Sh(\cM^\op)$ with Zariski open image.

Let $\cF:\cM\to\Sets$ be a functor. An \emph{open cover of $\cF$} is a collection $\{X_i\hookrightarrow \cF\}_{i\in I}$ of Zariski open subfunctors such that 
$$\coprod_{i\in I}X_i\to\cF$$
is an epimorphism in $\Sh(\cM^\op)$.

A \emph{TV-scheme} is a sheaf $\cF:\cM^\op\to\Sets$ that has an open cover by affine TV-schemes.

Given an affine TV-scheme $X=\Spec_{TV} A$, we define its base extension to $\Z$ as the scheme $X_\Z=\Spec \Z[A]$. This extends to a base extension functor
$$ -\otimes_\Fun\Z: \ \{\text{TV-schemes}\} \ \longrightarrow \ \{\text{schemes}\}. $$

\begin{remark}
 In their paper \cite{Toen2008}, To\"en and Vaqui\'e define schemes w.r.t.\ any \emph{cosmos}, that is, a symmetric closed-monoidal category that admits small limits and small colimits. It is possible to speak of monoids in such a category. In the case of the category $\Sets$, we obtain $\cM$ as monoids and the schemes w.r.t.\ $\Sets$ are TV-schemes as described above. The category of $\Z$-modules yields commutative rings with $1$ as monoids, and the schemes w.r.t.\ the category of $\Z$-modules are schemes in the usual sense. There are several other interesting categories obtained by this construction that can be found in \cite{Toen2008}, also cf.\ Remark \ref{rem_salch}.
\end{remark}

%%%%%%%%%%%%%%%%%%%%%%%%%%%%%%%%%%%%%	

%%%%%%%%%%%%%%%%%%%%%%%%%%%
%% Shai-Haran

\subsection{Haran's non-additive geometry} \label{hschemes}
In \cite{Haran2007}, Haran proposes an ``absolute geometry'' in which the field with one element and rings over $\Fun$ are realized as certain categories.

Haran's definition of the field with one element is the category whose objects are pointed finite sets and together with morphisms that are maps $\varphi:X\to Y$ such that $f_{|X\setminus \vphi^{-1}(0_Y)}$ is injective (where $0_Y$ is the base point of $Y$).

Neglecting the base point yields an equivalence of this category with the category $\F$ of finite sets together with partial bijections, i.e.\ the morphism set from $X$ to $Y$ is 
	\[
		\F_{Y,X} = \Hom_{\F}(X,Y) := \{\vphi:U\overset{\sim}{\lto} V|\ U\subseteq X, V\subseteq Y\}.
	\]

The category $\F$ is endowed with two functors $\oplus$ (disjoint union) and $\ot$ (cartesian product). These functors induce two structures $(\F,\oplus,[0]=\varnothing)$ and $(\F,\ot, [1]=\{\ast\})$ of symmetric monoidal categories on $\F$. An \dtext{$\F$-ring} is a category $A$ with objects $\Ob(A)=\Ob(\F)$ being finite sets, endowed with a faithful functor $\F\to A$ that is the identity on objects, and two functors $\oplus, \ot:A\times A \to A$ extending the ones in $\F$ and each making $A$ into a symmetric monoidal category. For example, every ring together with the matrix algebras with coefficients in the ring is an $\F$-ring. This embeds the category of rings into the category of $\F$-rings.

An \dtext{ideal} of an $\F$-ring $A$ is a collection of subsets $\{I_{Y,X}\subseteq A_{Y,X}\}_{X,Y\in |\F|}$ that is closed under the operations $\circ$, $\oplus$, $\ot$, i.e 
	\[
		A\circ I \circ A \subseteq I,\qquad A\ot I \subseteq I\quad\text{and}\quad I\oplus I \subseteq I.
	\]
An ideal $\mathfrak{a}\subseteq A$ is called \dtext{homogeneous} if it is generated by $\mathfrak{a}_{[1],[1]}$. A subset $\mathfrak{A}\subseteq A_{[1],[1]}$ is called an \dtext{H-ideal} if for all $a_1,\dotsc,a_n\in \mathfrak{A}$, $b\in A_{[1],[n]}$, $b'\in A_{[n],[1]}$ we have $b\circ(a_1\oplus\dotsb\oplus a_n)\circ b'\in \mathfrak{A}$. If $\mathfrak{a}$ is a homogeneous ideal, then $\mathfrak{a}_{[1],[1]}$ is an H-ideal, and conversely every H-ideal $\mathfrak{A}$ generates a homogeneous ideal $\mathfrak{a}$ such that $\mathfrak{a}_{[1],[1]}=\mathfrak{A}$. An H-ideal $\mathfrak{p}\subseteq A_{[1],[1]}$ is \dtext{prime} if its complement $A_{[1],[1]}\setminus \mathfrak{p}$ is multiplicatively closed. The set $\Spec A$ of all prime H-ideals can be endowed with the Zariski topology in the usual way. Localization of an $\F$-ring $A$ with respect to a multiplicative subset $S\subseteq A_{[1],[1]}$ also works exactly as classical localization theory for commutative rings, with the localization functor having all the nice properties we might expect. We will denote by $ A_{\mathfrak{p}}=\left(A_{[1],[1]}\setminus\mathfrak{p}\right)^{-1}A$ the localization of $A$ with respect to the complement of a prime $H$-ideal $\mathfrak{p}$.

As in Deitmar's geometry, we can use this localization theory to build structure sheaves. Let $A$ be an $\F$-ring and $U\subseteq \Spec A$ an open set. For $X, Y\in |\F|$ let $\O_A(U)_{Y,X}$ denote the set of functions
	\[
		s: U \lto \bdu_{\mathfrak{p}\in U} (A_{\mathfrak{p}})_{Y,X}
	\]
such that $s(\mathfrak{p})\in (A_\mathfrak{p})_{Y,X}$ and $s$ is locally a fraction, i.e.\ for all $\mathfrak{p}\in U$, there is a neighborhood $V$ of $\mathfrak{p}$ in $U$, an $a\in A_{Y,X}$ and an $f\in A_{[1],[1]}\setminus \bdu_{\mathfrak{q}\in V}\mathfrak{q}$ such that $s(\mathfrak{q}) = a/f$ for all $\mathfrak{q} \in V$. This construction yields a sheaf $\O_A$ of $\F$-rings, which is called the \dtext{structure sheaf} of $A$. For each H-prime $\mathfrak{p}$ the stalk $\O_{A,\mathfrak{p}}:=\varinjlim_{\mathfrak{p}\in U} \O_A(U)$ is isomorphic to $A_{\mathfrak{p}}$. Moreover, the $\F$-ring $\Gamma(\Spec A, \O_A)$ of global sections is isomorphic to $A$.

An \dtext{$\F$-ringed space} is a pair $(X,\O_X)$ where $X$ is a topological space and $\O_X$ is a sheaf of $\F$-rings. An $\F$-ringed space $(X,\O_X)$ is \emph{$\F$-locally ringed} if for each $p\in X$ the stalk $\O_{X,p}$ is local, i.e.\ contains a unique maximal H-ideal $\mathfrak{m}_{X,p}$. A \dtext{(Zariski) $\F$-scheme} is an $\F$-locally ringed space $(X,\O_X)$ such that there is an open covering $X=\bigcup_{i\in I} U_i$ for which the canonical maps $P_i:(U_i,{\O_{X}}_{|U_i})\to \Spec\O_X(U_i)$ are isomorphisms of $\F$-locally ringed spaces. The category of \dtext{$\F$-schemes} is the category of inverse systems (or pro-objects) in the category of Zariski $\F$-schemes.

For $\F$-rings $A$, $B$, $C$ and morphisms $A\to B$ and $A\to C$, we can construct the \emph{(relative) tensor product $B\circledcirc_A C$}. Using this construction, it follows that the category of (Zariski) $\F$-schemes contains fibered sums. In particular, we can take $A=\F$ and $C=\F(\Z)$, in order to obtain an extension of scalars from $\F$ to $\F(\Z)$; however, the category of $\F(\Z)$-algebras is not equivalent to the category of rings (this is pretty much due to the existence of $\F$-rings which are not matrix rings, cf.\ \cite{ArndtUN} for details), so this functor does not provide an extension of scalars from $\F$-schemes to usual schemes.

The embedding of the category of rings inside the category of $\F$-rings show that every scheme can be regarded as an $\F$-scheme. A similar construction allows to produce an $\F$-ring out of a monoid, providing a relation with $\cM$-schemes, cf.\ section \ref{mtoh} for further details. It is worth noting that all the examples mentioned here are what Haran calls \emph{rings of matrices}; there are examples (cf.\ \cite[\S 2.3, Examples 3 and 5]{Haran2007} of more exotic $\F$-rings that are not rings of matrices. Haran succeeds (cf.\ \cite[\S 6.3]{Haran2007}) in defining the completion $\overline{\Spec\Z}$ of the spectrum of $\Z$, which is one step in Deninger's program to prove the Riemann hypothesis.

%%%%%%%%%%%%%%%%%%%%%%%%%%%%%%%%%%%%%	
	
\subsection{Durov's generalized schemes} \label{durov}

In \cite{Durov2007}, Durov introduces a generalization of classical algebraic geometry, into that Arakelov geometry fits naturally. As a byproduct of his theory of generalized rings, he obtains a model for the field with one element and a notion of a geometry over $\Fun$. Here, we outline Durov's construction briefly. For full details, cf.\ \cite{Durov2007}, or consider \cite{Fresan2009} for a summary.

Any ring $R$ can be realized as the endofunctor in the category of sets that maps a set $X$ to $R^X$, the free $R$-module generated by $X$. The ring multiplication and unit translate into properties of this functor, making it into a \dtext{monad} that commutes with filtered direct limits (cf.\ for instance \cite{Street1972} or \cite[Chapter VI]{MacLane1998}). Motivated by this fact, Durov defines a \dtext{generalized (commutative) ring} as an monad in the category of sets that is commutative (cf.\ \cite[\S 5.1]{Durov2007} for the precise notion of commutativity) and commutes with filtered direct limits. If the set $A([0])=A(\varnothing)$ is not empty, $A$ is said to \emph{admit a constant}, or we say that $A$ is a \emph{generalized ring with zero} (cf.\ \cite[\S 5.1]{Durov2007}). There is a natural notion of a \dtext{module} over a monad, which allows to construct the category of modules over any generalized ring $A$. Every generalized ring $A$ has an underlying monoid $|A|:=A([1])$, so we can define a \dtext{prime ideal} of $A$ as any $A$-submodule $\mathfrak{p}$ of $|A|$ such that the complement $|A|\setminus\mathfrak{p}$ is a multiplicative system. The set $\Spec A$ of all the prime ideals in $A$ can be endowed with the Zariski topology in the usual way. 

The notions of localization, presheaves and sheaves of generalized rings are defined analogously to the usual theory of schemes resp.\ to the theory of the former sections. We can talk about \dtext{generalized ringed spaces} $(X,\O_X)$ consisting of a topological space $X$ with a sheaf of generalized rings $\O_X$. A generalized ringed space is \dtext{local} if for every $p\in X$ the generalized ring $\O_{X,p}$ has a unique maximal ideal. Every generalized ring defines a locally generalized ringed space $(\Spec A, \O_A)$, which is called the \dtext{generalized affine scheme} associated to $A$. A \dtext{generalized scheme} is then a locally generalized ringed space which is locally isomorphic to a generalized affine scheme. In this setting, the category of $\Fun$-schemes consists precisely of those generalized schemes $(X,\O_X)$ for that the set $\Gamma(X,\O_X(\boldsymbol{0}))$ is not empty, also called \dtext{generalized schemes with zero} (cf.\ \cite[\S 6.5.6]{Durov2007}).

Examples of generalized schemes in the sense of Durov include usual schemes via the aforementioned realization of a ring as a generalized ring, as well as schemes over the spectrum of monoids or semi-rings, in particular in Durov's theory it is possible to speak about schemes over the natural numbers, the completion $\Z_\infty$ or the tropical semi-ring $\mathbb{T}$. As Haran does in the context of his non-additive geometry, Durov defines the completion $\overline{\Spec\Z}$ of the spectrum of $\Z$ as a generalized scheme with zero. However, Durov's construction forces the fibered product $\overline{\Spec\Z}\times_{\Fun} \overline{\Spec\Z}$ to be isomorphic again to $\overline{\Spec\Z}$, making it unsuitable to pursue Deninger's program.

%%%%%%%%%%%%%%%%%%%%%%%%%%%%%%%%%%%%%	

\subsection{Varieties over $\Fun$ in the sense of Soul\'e and its variations}
\label{svarietiesandco}

The first suggestion of a category that realizes a geometry over $\Fun$ was given by Soul\'e in \cite{Soule2004}. This notion found several variations (\cite{Soule2009}, \cite{Connes2008}) that finally lead to the notion of $\Fun$-schemes as given by Connes and Consani in \cite{Connes2009}, cf.\ the following section. We do not give the formal definitions from \cite{Soule2004}, \cite{Soule2009} and \cite{Connes2008} in its completeness, but try to give an overview of how ideas developed.

\subsubsection{S-varieties}
\label{svarieties}
We begin with the notion of varieties over $\Fun$ as given in \cite{Soule2004}. To avoid confusion, we will call these varieties over $\Fun$ \emph{``S-varieties''}. For a precise definition, cf.\ \cite{Soule2004} or \cite{LL}.

An \emph{affine S-variety} $X$ consists of
\begin{itemize}
 \item a functor $\uline X: \{\text{finite flat rings}\} \to \{\text{finite sets}\}$,
 \item a complex (non necessarily commutative) algebra $\cA_X$,
 \item an affine scheme $X_\Z=\Spec A$ of finite type,
 \item a natural transformation $\ev_X:\uline X\Rightarrow\Hom(\cA_X,-\otimes_\Z\C)$,
 \item an inclusion of functors $\uline\iota:\uline X\Rightarrow\Hom(A,-)$ and
 \item an injection $\iota_\C:A\otimes_\Z\C\hookrightarrow\cA_X$
\end{itemize}
such that the diagram
\begin{equation}\label{S-diagram}
\xymatrix{\uline X(R)\ar[rr]^{\uline\iota(R)}\ar[d]_{\ev_X(R)}&&\Hom(A,R)\ar[d]^{-\otimes_\Z\C}\\\Hom(\cA_X,R\otimes_\Z\C)\ar[rr]^{\iota_\C^\ast} &&\Hom(A\otimes_\Z\C,R\otimes_\Z\C)} 
\end{equation}
commutes for all finite flat rings $R$ and such that a certain universal property is satisfied. This universal property characterizes $X_\Z$ together with $\uline \iota$ and $\iota_\C$ as the unique extension of the triple $(\uline X,\cA_X,\ev_X)$ to an affine S-variety. We define $X_\Z$ as the \emph{base extension of $X$ to $\Z$}.

A \emph{morphism $X\to Y$ between affine S-varieties} consists of
\begin{itemize}
 \item a natural transformation $\uline X\to\uline Y$,
 \item a $\C$-linear map $\cA_Y\to\cA_X$ and
 \item a morphism $X_\Z\to Y_\Z$ of schemes
\end{itemize}
such that they induce a morphism between the diagrams \eqref{S-diagram} corresponding to $X$ and $Y$.

The idea behind the definition of global S-varieties is to consider functors on affine S-varieties. More precisely, an \emph{S-variety} $X$ consists of 
\begin{itemize}
 \item a functor $\uline X: \{\text{affine S-varieties}\} \to \{\text{finite sets}\}$,
 \item a complex algebra $\cA_X$,
 \item a scheme $X_\Z$ of finite type with global sections $A$,
 \item a natural transformation $\ev_X:\uline X\Rightarrow\Hom(\cA_X,\cA_{(-)})$,
 \item an inclusion of functors $\uline\iota:\uline X\Rightarrow\Hom((-)_\Z,X_\Z)$ and
 \item an injection $\iota_\C:A\otimes_\Z\C\hookrightarrow\cA_X$
\end{itemize}
such that the diagram
\begin{equation*}
\xymatrix{\uline X(V)\ar[rr]^{\uline\iota(V)}\ar[d]_{\ev_X(V)}&&\Hom(V_\Z,X_\Z)\ar[d]^{\substack{\text{global}\\ \text{sections}}}\\ \Hom(\cA_X,\cA_V)\ar[rr]^{\iota_\C^\ast} &&\Hom(A\otimes_\Z\C,\cA_V)} 
\end{equation*}
commutes for all finite flat rings $R$ and such that a certain universal property is satisfied. This universal property plays the same r\^ole as the universal property in the definition of an affine S-variety. We define $X_\Z$ as the base extension of $X$ to $\Z$. Morphisms of S-varieties are defined analogously to the affine case.

A remarkable property of these categories is that the dual of the category of finite flat rings embeds into the category of affine S-varieties and that the category of affine S-varieties embeds into the category of S-varieties. The essential image of the latter embedding are those S-varieties whose base extension to $\Z$ is an affine scheme. Furthermore, a system of affine S-varieties ordered by inclusions that is closed under intersections can be glued to an S-variety.

In his paper, Soul\'e constructs for every smooth toric variety an S-variety whose base extension to $\Z$ is isomorphic to the toric variety. This result was extended in \cite[Theorem\ 3.11]{LL}, also cf.\ section \ref{torifiedtos}. 

The paper contains a definition of a zeta function for those S-varieties $X$ that admit a polynomial counting function, i.e.\ that the function $q\mapsto\# X_\Z(\F_q)$ on prime powers $q$ is given by a polynomial in $q$ with integer coefficients. This provides, up to a factor $2\pi$, a first realization of Deninger's motivic zeta functions of $h^0$ and $h^2$ (as in the introduction) as zeta functions of the \emph{``absolute point $\Spec\Fun$''} and the \emph{``affine line over $\Fun$''}.

\subsubsection{S${}^\ast$-varieties}
\label{sstarvarieties}

In \cite{Soule2009}, Soul\'e describes the first modification of this category. The idea is to exchange the functor on finite flat rings by a functor on finite abelian groups. Correspondingly, the functors $\Hom(\cA_X,-\otimes_\Z\C)$ and $\Hom(A,-)$ in the definition of an affine S-variety are exchanged by $\Hom(\cA_X,\C[-])$ and $\Hom(A,\Z[-])$, respectively, where $\Z[D]$ and $\C[D]$ are the group algebras of a finite abelian group $D$. We call the objects that satisfy the definition of an affine S-variety with these changes \emph{affine S${}^\ast$-varieties}. Morphisms between affine S${}^\ast$-varieties and the base extension to $\Z$ is defined analogously to the case of affine S-varieties. Also the definition of \emph{S${}^\ast$-varieties} and morphisms between S${}^\ast$-varieties are defined in complete analogy to the case of S-varieties. 

Soul\'e proves similar results for S${}^\ast$-varieties as for S-varieties: Affine S${}^\ast$-varieties are S${}^\ast$-varieties. A system of affine S${}^\ast$-varieties ordered by inclusions that is closed under intersections can be glued to an S${}^\ast$-variety. Every smooth toric variety has a model as an S${}^\ast$-variety (see also section \ref{torifiedtos}). The definition of zeta functions transfers to this context.

\subsubsection{CC-varieties}
\label{ccvarieties}

In \cite{Connes2008}, Connes and Consani modify the notion of an S${}^\ast$-variety in the following way. They endow the functor $\uline X$ from finite abelian groups to finite sets with a grading, i.e.\ for every finite abelian group $D$, the set $\uline X(D)$ has a decomposition $\uline X(D)=\coprod_{n\in\N} \uline X^n(D)$ into a disjoint union of sets. Secondly, they exchange the complex algebra by a complex variety. We call the objects that satisfy the definition of an affine S${}^\ast$-variety with these changes \emph{affine CC-varieties}. Morphisms between affine CC-varieties and the base extension to $\Z$ are defined analogously to the case of affine S-varieties and S${}^\ast$-varieties. 

The category of affine CC-varieties is embedded into a larger category that plays the r\^ole of the category of locally ringed spaces in the theory of schemes. \emph{CC-varieties} are defined as those objects in the larger category that admit a cover by affine CC-varieties.

The definition of zeta functions transfers to this context. Connes and Consani show certain examples of CC-schemes, where the zeta function can be read off from the graded functor of the CC-scheme.

The main application of this paper is the construction of models of split reductive groups as CC-varieties over ``$\F_{1^2}$''. This is a first construction of objects in $\Fun$-geometry that contributes to Tits' ideas from \cite{Tits1957}. These results were extended in \cite{LL}; in particular, they also hold in the context of S-varieties and S${}^\ast$-varieties, cf.\ section \ref{torifiedtos}. However, these categories are not suitable to define a group law for any split reductive group over $\Fun$, cf.\ \cite[p.\ 25]{Connes2008} and \cite[Section 6.1]{LL}. For how this can be done in the context of CC-scheme, see section \ref{algebraic_groups} of the present text.

%%%%%%%%%%%%%%%%%%%%%%%%%%%%%%%%%%%%%	

\subsection{Schemes over $\Fun$ in the sense of Connes and Consani}
\label{ccschemes}

In their paper \cite{Connes2009}, Connes and Consani merge Soul\'e's idea and its variations with Kato's resp.\ Deitmar's monoidal spaces and To\"en and Vaqui\'e's sheaves on monoids. Roughly speaking, a scheme over $\Fun$ in the sense of Connes and Consani, which we call \emph{``CC-scheme''} to avoid confusion, is a triple consisting of a locally representable functor on monoids, a scheme and an evaluation map. Unlike Kato/Deitmar and To\"en-Vaqui\'e, all monoids are considered together with a base point as in Haran's theory (\cite{Haran2007})--the locally representable functor is a functor from $\cM_0$ (see section \ref{hschemes}) to $\Sets$.

We make these notions precise. First of all, we can reproduce all steps in the construction of $\cM$-schemes as in section \ref{mschemes} to define $\cM_0$-schemes. Namely, an \emph{ideal} of a monoid $A$ with $0$ is a subset $I\subset A$ containing $0$ such that $IA\subset I$. A \emph{prime ideal} is an ideal $\p\subset A$ such that $A-\p$ is a submonoid of $A$. As in section \ref{mschemes}, we can define \emph{localizations} and the \emph{Zariski topology} on $\Spec_{\cM_0} A=\{\text{prime ideals of }A\}$, the \emph{spectrum of $A$}.

A \emph{monoidal space with $0$} is a topological space together with a sheaf in $\cM_0$. Together with its Zariski topology and its localizations, $\Spec A$ is a monoidal space with $0$. A \emph{geometric $\cM_0$-scheme} is a monoidal space with $0$ that has an open covering by spectra of monoids with $0$. 

Similar to section \ref{tvschemes}, $\cM_0$-schemes are defined as locally representable functors on $\cM_0$. In detail, an \emph{$\cM_0$-functor} is a functor from $\cM_0$ to $\Sets$. Every monoid $A$ with $0$ defines an $\cM_0$-functor $X=\Spec_{\cM_0} A$ by sending a monoid $B$ with $0$ to the set $X(B)=\Hom(\Spec_{\cM_0} B,\Spec_{\cM_0} A)=\Hom(A,B)$. An \emph{affine $\cM_0$-scheme} is a $\cM_0$-functor that is isomorphic to $\Spec_{\cM_0}A$ for some monoid $A$ with $0$.

A subfunctor $Y\subset X$ of an $\cM_0$-functor $X$ is called an \emph{open subfunctor} if for all monoids $A$ with $0$ and all morphisms $\varphi:Z=\Spec_{\cM_0}A\to X$, there is an ideal $I$ of $A$ such that for all monoids $B$ with $0$ and for all $\rho\in Z(B)=\Hom(A,B)$,
$$ \varphi(\rho)\in Y(B)\subset X(B) \iff \rho(I)B=B. $$
An \emph{open cover of $X$} is a collection $\{X_i\to X\}_{i\in I}$ of open subfunctors such that the map $\coprod_{i\in I}X_i(H)\to X(H)$ is surjective for every monoid $H$ that is a union of a group with $0$. An \emph{$\cM_0$-scheme} is an $\cM_0$-functor that has an open cover by affine $\cM_0$-schemes. 

If $X$ is an $\cM_0$-scheme, then the \emph{stalk at $x$} is $\cO_{X,x}=\lim\limits_\to X(U)$, where $U$ runs through all open neighborhoods of $x$. A \emph{morphism of $\cM_0$-schemes $\varphi: X\to Y$} is a natural transformation of functors that is \emph{local}, i.e.\ for every $x\in X$ and $y=\varphi(x)$, the induced morphism $\varphi^\sharp_x: \cO_{Y,y}\to\cO_{X,x}$ between the stalks satisfies that $(\varphi^\sharp_x)^{-1}(\cO_{X,x}^\times)=\cO_{Y,y}^\times$.

Connes and Consani claim that the $\cM_0$-functor $\Hom(\Spec_{\cM_0}-,X)$ is a $\cM_0$-scheme for every geometric $\cM_0$-scheme $X$ and that, conversely, every $\cM_0$-scheme is of this form (\cite[Prop.\ 3.16]{Connes2009}). For this reason, we shall make no distinction between geometric $\cM_0$-schemes and $\cM_0$-schemes from now on.

The association 
$$ A\to \bigl(Z[A]/1\cdot 0_A-0_{\Z[A]}\bigr) $$
where $0_A$ is the zero of $A$ and $0_{\Z[A]}$ is the zero of $\Z[A]$ extends to the base extension functor
$$ -\otimes_{\cM_0}\Z: \ \{\cM_0-\text{schemes}\} \ \longrightarrow \ \{\text{schemes}\}. $$
We denote the base extension of an $\cM_0$-scheme $X$ to $\Z$ by $X_\Z=X\otimes_{\cM_0}\Z$.

The ideas from sections \ref{svarieties}--\ref{ccvarieties} find now a simplified form. A \emph{CC-scheme} is a triple $(\widetilde X,X,\ev_X)$ where $\widetilde X$ is an $\cM_0$-scheme, $X$ is a scheme, viewed as a functor on the category of rings, and $\ev_X: \widetilde X_\Z\Rightarrow X$ is a natural transformation that induces a bijection $\ev_X(k):\widetilde X_\Z(k)\to X(k)$ for every field $k$. The natural notion of a \emph{morphism between CC-schemes $(\widetilde Y,Y,\ev_Y)$ and $(\widetilde X,X,\ev_X)$} is a pair $(\widetilde\varphi,\varphi)$ where $\widetilde\varphi:\widetilde Y\to\widetilde X$ is a morphism of $\cM_0$-schemes and $\varphi:Y\to X$ is a morphism of schemes such that the diagram
$$ \xymatrix{\widetilde X_\Z\ar[rr]^{\widetilde\varphi_\Z}\ar[d]_{\ev_X}&&\widetilde Y_\Z\ar[d]^{\ev_Y}\\ X\ar[rr]^\varphi&&Y} $$
commutes. The base extension of $\cX=(\widetilde X,X,\ev_X)$ to $\Z$ is $\cX_\Z=X$.

\begin{remark}
\label{rem_salch}
The definition of an $\cM_0$-scheme is in the line of thoughts of To\"en and Vaqui\'e's theory as described in section \ref{tvschemes}. Indeed, monoid objects in the category of pointed sets are monoids with $0$. It seems to be likely, but it is not obvious, that the notion a scheme w.r.t.\ the category of pointed sets are $\cM_0$-schemes. Andrew Salch is working on making this precise. Furthermore, Salch constructs a cosmos such that monoid objects on this category correspond to triples $(\widetilde X,X,e_X)$ where $\widetilde X$ is an $\cM_0$-functor, $X$ is a functor on rings and $e_X:\widetilde X_\Z\Rightarrow X$ is a natural equivalence. 

It is however questionable whether CC-schemes have an interpretation as schemes w.r.t.\ some category in the sense of To\"en and Vaqui\'e since it is not clear what an affine CC-scheme should be. See Remark \ref{non-affine_torification} for a more detailed explanation.
\end{remark}

\begin{comment} 
\begin{remark}\label{algebraic_groups}
The aim of Connes and Consani's paper \cite{Connes2008} as described in section \ref{ccvarieties} was to realize Tits' idea from \cite{Tits1957} of giving the Weyl group of a split reductive group scheme (cf.\ \cite[Expos\'e XIX, Def.\ 2.7]{SGA3}) an interpretation as a ``Chevalley over $\Fun$''. Connes and Consani obtained partial results, namely, that every split reductive group scheme $G$ has a model ``over $\F_{1^2}$'' and that the normalizer $N$ of a maximal split torus $T$ of $G$ can be defined as a group object over $\F_{1^2}$. However, there is no model of $G$ as an algebraic group over $\Fun$ or $\F_{1^2}$. For a further discussion of possibilities and limitation in this context, see \cite[section 6.1]{LL}.

In the category of CC-schemes, the same reasons prevent split reductive group schemes (except for tori) to have a model as an algebraic group over $\Fun$. However, this category is flexible enough to invent a new notion of morphism such that in the corresponding category, there is a algebraic group over $\Fun$ for every split reductive group scheme. Moreover, a second notion of morphism yields the Weyl group of the split reductive group scheme as its group of $\Fun$-points. See \cite{Lorscheid2009} for more detail.
\end{remark}
\end{comment}

%%%%%%%%%%%%%%%%%%%%%%%%%%%%%%%%%%%%%	

\subsection{Torified varieties}\label{torifsch}

Torified varieties and schemes were introduced by the authors of this text in \cite{LL} in order to establish examples of varieties over $\Fun$ in the sense of Soul\'e (cf.\ section \ref{svarieties}) and Connes-Consani (cf.\ section \ref{ccvarieties}).

A \dtext{torified scheme} is a scheme $X$ endowed with a \dtext{torification} $e_X:T\to X$, i.e.\ $T=\bdu_{i\in I} \Gm^{d_i}$ is a disjoint union of split tori and $e_X$ is a morphism of schemes such that the restrictions ${e_X}_{|\Gm^{d_i}}$ are immersions and $e_X(k):T(k)\to X(k)$ is a bijection for every field $k$. Examples of torified schemes include (cf.\ \cite[\S 1.3]{LL} or \cite{Lorscheid2009}):
	\begin{itemize}
	 	\item Toric varieties, with torification given by the orbit decomposition.
		\item Grassmann and Schubert varieties, where the torification is induced by the Schubert cell decomposition. 
		\item Split reductive groups, with torification coming from the Bruhat decomposition.
	\end{itemize}
	
Let $X$ and $Y$ be torified schemes with torifications $e_X:T=\bdu_{i\in I} \Gm^{d_i}\to X$ and $e_Y:S=\bdu_{j\in J} \Gm^{f_j}\to Y$, respectively. A \dtext{torified morphism $(X,T)\to (Y,S)$} consists of a pair of morphisms $\vphi:X\to Y$ and $e_{\vphi}:T\to S$ such that the diagram 
	\[
		\xymatrix{
			X \ar[rr]^{\vphi} && Y \\
			T \ar[u]^{e_X} \ar[rr]^{e_\vphi} && S \ar[u]_{e_Y}
		}
	\]
commutes and such that for every $i\in I$ there is a $j\in J$ such that the restriction ${e_{\vphi}}_{|\Gm^{d_i}}: \Gm^{d_i} \to \Gm^{f_j}$ is a morphism of algebraic groups.

A torified scheme $(X,T)$ is \dtext{affinely torified} if there is an affine open cover $\{U_j\}$ of $X$ that respects the torification, i.e.\ such that for each $j$ there is a subset $T_j=\bdu_{i\in I_j}\Gm^{d_j}$ such that the restriction ${e_X}_{|T_j}$ is a torification of $U_j$.	A torified morphism is \dtext{affinely torified} if there is an affine open cover $\{U_j\}$ of $X$ respecting the torification and such that the image of each $U_j$ is an affine subscheme of $Y$. An \emph{(affinely) torified variety} is an (affinely) torified scheme that is reduced and of finite type over $\Z$.

Toric varieties and split reductive groups are examples of affinely torified varieties, cf.\ \cite[\S 1.3]{LL}, whilst the torifications associated to Grassmann and Schubert varieties are in general not affine.

%%%%%%%%%%%%%%%%%%%%%%%%%%%%%%%%%%%%%	

\subsection{$\Lambda$-schemes after Borger}
\label{lambdaschemes}

An approach that is of a vein different to all the other $\Fun$-geometries is Borger's notion of a $\Lambda$-scheme (see \cite{Borger2009}). A $\Lambda$-scheme is a scheme with an additional decoration, which is interpreted as descent datum to $\Fun$. In order to give a quick impression, we restrict ourselves to introduce only flat $\Lambda$-schemes in this text. %In particular every reduced $\Lambda$-scheme is flat.

A \emph{flat $\Lambda$-scheme} is a flat scheme $X$ together with a family $\Phi=\{\varphi_p:X\to X\}_{p\text{ prime}}$ of pairwise commuting endomorphism of $X$ such that the diagram
$$ \xymatrix{X\otimes_\Z\F_p\ar[d]\ar[rr]^{\Frob_p} && X\otimes_\Z\F_p\ar[d]\\ X\ar[rr]^{\varphi_p}&& X} $$
commutes for every primes number $p$. Here, $\Frob_p:X\otimes_\Z\F_p\to X\otimes_\Z\F_p$ is the Frobenius morphism of the reduction of $X$ modulo $p$. 

The base extension of this flat $\Lambda$-scheme is the scheme $X$. In particular every reduced scheme $X$ that has a family $\Phi$ as described above is flat.

Examples of $\Lambda$-schemes are toric varieties. For more examples, consider \cite[sections 2]{Borger2009} or section \ref{mtolambda} of this text.

%%%%%%%%%%%%%%%%%%%%%%%%%%%%%%%%%%%%%
%%%%%%%%%%%%%%%%%%%%%%%%%%%%%%%%%%%%%

\section{Paths and bridges}\label{bridges}

In this second part of the paper, we review and construct various functors between the different $\Fun$-geometries. In some central cases, we will describe functors in detail, in other cases, we will give a reference. We will also describe some work in progress by Arndt and Marty on such functors. All these functors are displayed in the large diagram in Figure \ref{bigpicture}.

%%%%%%%%%%%%%%%%%%%%%%%%%%%%%%%%%%%%%

\subsection{Toric varieties as $\cM$-schemes}
\label{torictom}

In this section, we recall the definition of a toric variety, describe the reformulation of toric varieties in terms of a fan of monoids as done by Oda (\cite{Oda1988}) and relate them to $\cM$-schemes as done by Kato (\cite{Kato1994}). For more details on toric varieties, consider \cite{Fulton1993} and \cite{Maillot2000}.

A \emph{(strictly convex and rational) cone (in $\R^n$)} is an additive semi-group $\tau\subset\R^n$ of the form $\tau=\sum_{i\in I} t_i\R_{\geq0}$ where  $\{t_i\}_{i\in I}\subset\Q^n\subset\R^n$ is a linearly independent set. A \emph {face $\sigma$ of $\tau$} is a cone of the form $\sigma=\sum_{i\in J} t_i\R_{\geq0}$ for some subset $J\subset I$. A \emph{fan} is a non-empty collection $\Delta$ of cones in $\R^n$ such that
\begin{enumerate}
 \item each face of a cone $\tau\in\Delta$ is in $\Delta$ and
 \item for all cones $\tau,\sigma\in\Delta$, the intersection $\tau\cap\sigma$ is a face of both $\tau$ and $\sigma$.
\end{enumerate}
In particular, the face relation makes $\Delta$ into a partially ordered set.

If $\tau$ is a cone in $\R^n$, we define $A_\tau$ as the intersection $\tau^\vee\cap\Z^n$ of the dual cone $\tau^\vee$ with the lattice $\Z^n\subset\R^n$. Since the generators $t_i$ of $\tau$ have rational coordinates, $A_\tau$ is a finitely generated (additively written) monoid. An inclusion $\sigma\subset\tau$ of cones induces an open embedding of schemes $\Spec\Z[A_\sigma]\hookrightarrow\Spec\Z[A_\tau]$. A \emph{toric variety (of dimension $n$)} is a scheme $X$ together with a fan $\Delta$ (in $\R^n$) such that
$$ X \ \simeq \ \lim_{\substack{\longrightarrow\\ \tau\in\Delta}} \; \Spec\Z[A_\tau]. $$

A \emph{morphism $\psi:\Delta\to \Delta'$ of fans} is map $\widetilde\psi$ between partially ordered sets together with a direct system of semi-group morphisms $\psi_\tau:\tau\to\widetilde\psi(\tau)$ (with respect to the inclusion of cones). The dual morphisms induce monoid homomorphisms $\psi_\tau^\vee: A_{\widetilde\psi(\tau)}\to A_\tau$. Taking the direct limit over the system of scheme morphisms $\Spec\Z[\psi_\tau^\vee]:\Spec\Z[A_\tau]\to \Spec\Z[A_{\widetilde\psi(\tau)}]$ yields a morphism $\varphi:X\to X'$ between toric varieties. Such a morphism is called a \emph{toric morphism}.

Note that a toric variety is determined by its fan and a toric morphism between toric varieties is determined by the morphism between the fans. This leads to a completely combinatorial description of the category of toric varieties in terms of monoids as follows (cf.\ \cite[section 9]{Kato1994}).

Recall the definition of the quotient group $\Quot A$ and of an integral monoid from section \ref{mschemes}. The monoid $A$ is \emph{saturated} if it is integral and if for all $a\in A$, $b\in \Quot A$ and $n>0$ such that $b^n=a$, we have that $b\in A$. A \emph{fan in $\Z^n$} is a collection $\Delta$ of (additive) submonoids $A\subset\Z^n$ such that
\begin{enumerate}
 \item all $A\in\Delta$ are finitely generated and saturated, $A^\times={1}$ and $\Z^n/\Quot A$ is torsion-free,
 \item for all $A\in\Delta$ and $\p\in\Spec_\cM A$, we have $A\setminus\p\in\Delta$, and
 \item for all $A,B\in\Delta$, we have $A\cap B=A\setminus\p=B\setminus\q$ for some $\p\in\Spec_\cM A$ and $\q\in\Spec_\cM B$.
\end{enumerate}
In particular, $\Delta$ is a diagram of monoids via the inclusions $A\setminus\p\hookrightarrow A$.

This yields the following alternative description: a toric variety is a scheme $X$ together with a fan $\Delta$ in $\Z^n$ such that 
$$ X \ \simeq \ \lim_{\substack{\longrightarrow\\ A\in\Delta}} \; \Spec\Z[A]. $$
Note that $\Spec\Z[A]$ is the base extension of the $\cM$-scheme $\Spec_\cM A$ to $\Z$, and that for
$$ \widetilde X \ = \ \lim_{\substack{\longrightarrow\\ A\in\Delta}} \; \Spec_\cM A, $$
$\widetilde X_\Z\simeq X$. We introduce some further definitions to state Theorem \ref{thm_torictom} where we follow ideas from \cite[section 4]{Deitmar2007}.

An $\cM$-scheme is \emph{connected} if its topological space is connected. An $\cM$-scheme is \emph{integral / of finite type / of exponent $1$} if all its stalks are integral / of finite type / (multiplicatively) torsion-free.

\begin{theorem}[{\cite[Thm.\ 4.1]{LL}}]
\label{thm_torictom} 
The association as described above extends to a functor 
$$ \mathcal K:\{\text{toric varieties}\} \ \longrightarrow \ \{\cM\text{-schemes}\} $$
that induces an equivalence of categories
	\[ 
		\mathcal K:\ \ \left\{\begin{array}{c}\text{toric varieties}\end{array}\right\}
           \ \ \ \stackrel\sim\longrightarrow \ \ \ 
           \left\{\begin{array}{c}\textrm{connected integral $\cM$-schemes}\\ 
                                  \textrm{of finite type and of exponent }$1$ \end{array}\right\}
	\]
with $-\otimes_{\mathbb F_1}\mathbb Z$ being its inverse.

\end{theorem}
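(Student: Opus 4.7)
My strategy is to reduce the statement to an affine equivalence and then globalize by gluing. In the affine case, both categories are controlled by the same class of monoids, namely finitely generated saturated cancellative monoids $A$ with $A^\times=\{1\}$ and $\Quot A$ torsion-free, and I would first match the adjectives: ``finite type'' on $\Spec_\cM A$ corresponds to finite generation of $A$; ``integral'' corresponds to cancellativity; ``exponent $1$'' corresponds to torsion-freeness of $\Quot A$; and the existence of the canonical maximal ideal $\mathfrak{m}_A = A \setminus A^\times$ combined with connectedness of $\spec A$ forces $A^\times = \{1\}$. Saturation is the most delicate condition, and I would extract it from the requirement that the base extension $\Spec\Z[A]$ must land in the category of toric varieties via $\mathcal{K}\circ(-\otimes_{\Fun}\Z)$, which demands $A$ to be recovered as the full monoid of grouplike elements of its own group ring. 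For morphisms, the semigroup-ring functor $A \mapsto \Z[A]$ is fully faithful on monoids, and since toric morphisms are exactly those ring morphisms that preserve grouplike elements, they correspond bijectively to monoid morphisms and hence to affine $\cM$-scheme morphisms.

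For globalization, I would exploit the canonical affine open covers on both sides: on the toric side, $\{\Spec \Z[A_\tau]\}_{\tau \in \Delta}$; on the $\cM$-side, spectra of the stalks of the structure sheaf. Open immersions $\Spec \Z[A_\sigma] \hookrightarrow \Spec \Z[A_\tau]$ coming from a face relation $\sigma \prec \tau$ correspond, under the affine equivalence, to localizations $\Spec_\cM (A_\tau)_{\p} \hookrightarrow \Spec_\cM A_\tau$ at the prime $\p = A_\tau \setminus A_\sigma$, so the gluing data matches. It follows that $\mathcal{K}(X) \otimes_{\Fun} \Z \cong X$ by construction, and conversely any $\cM$-scheme in the specified subcategory glues from spectra $\Spec_\cM A_\tau$ whose $\Z$-base extensions reassemble to a toric variety whose fan is the combinatorial shape of the cover.

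The main obstacle I anticipate is the morphism comparison at the global level: a toric morphism must be equivariant for the torus actions, whereas an $\cM$-scheme morphism is defined only through local monoidal data. I would reconcile the two by choosing compatible affine covers on both sides, invoking the affine case to represent the $\cM$-scheme morphism as a compatible family of monoid morphisms, and then observing that each $\Z$-linear extension automatically preserves grouplike elements; the base extension is therefore torus-equivariant and patches to a toric morphism. Conversely, a toric morphism restricts on each affine chart to a monoid morphism of the grouplike parts, recovering the family. Carrying out this compatibility check cleanly, together with confirming that saturation really does drop out of the stated $\cM$-scheme hypotheses rather than needing to be imposed separately, is the technical point that I expect to require the most care.
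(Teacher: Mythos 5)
The paper itself contains no proof of this statement --- it is quoted from \cite[Thm.\ 4.1]{LL}, and the surrounding text only records Kato's dictionary between fans of cones and fans of monoids. Your skeleton (an affine dictionary between monoids and charts, then gluing along the canonical covers) is the same one used there, but two of your key steps fail as stated. The claim that connectedness of $\spec A$ forces $A^\times=\{1\}$ is false: $\spec A$ is always irreducible (the empty ideal is a generic point of the whole space), and the affine charts $A_\tau=\tau^\vee\cap\Z^n$ of a toric variety have large unit groups --- the chart of the dense torus is the group $\Z^n$ itself. The condition $A^\times=\{1\}$ in Kato's notion of a fan is a normalization of the cone monoids, not something your hypotheses deliver chartwise. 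What connectedness, integrality and exponent $1$ actually buy is a single generic point whose stalk is a finitely generated torsion-free abelian group $\Z^n$; this lattice is the ambient space in which the cones of the fan must then be constructed from the local monoids, and that construction --- which is the real content of essential surjectivity --- is exactly what your phrase ``the fan is the combinatorial shape of the cover'' elides.

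Second, your treatment of saturation is circular: you propose to extract it from ``the requirement that the base extension $\Spec\Z[A]$ must land in the category of toric varieties'', but that is the conclusion of essential surjectivity, not a hypothesis you may assume about an arbitrary object of the right-hand category. The danger is concrete: the submonoid $A=\langle t^2,t^3\rangle$ of the free monoid on $t$ is finitely generated, cancellative and torsion-free with trivial units, so $\Spec_\cM A$ satisfies every listed hypothesis, yet $\Z[A]=\Z[t^2,t^3]$ is the non-normal cuspidal cubic, which is not of the form $\Spec\Z[\tau^\vee\cap\Z^n]$ demanded by the paper's definition of a toric variety. Either saturation must be shown to follow from the global hypotheses by an actual argument, or the precise notion of integrality in \cite{LL} must be tracked; your plan contains neither. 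A smaller but genuine error: $A\mapsto\Z[A]$ is faithful but not full on ring homomorphisms (e.g.\ $t\mapsto t+1$ on $\Z[t]$ is induced by no monoid map), and $\Z[A]$ is not a Hopf algebra for a general monoid $A$, so ``grouplike elements'' is not available; full faithfulness of $\mathcal K$ has to be argued through fan data and toric morphisms, as the paper sets it up, not through the underlying rings alone.
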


%%%%%%%%%%%%%%%%%%%%%%%%%%%%%%%%%%%%%

\subsection{Comparison of $\cM$-schemes with TV-schemes}
\label{mtotv}

A project of Florian Marty is the investigation of the connection between $\cM$-schemes and TV-schemes. In the second arXiv version of his paper \cite{Marty2007}, one finds partial results. From a private communication, we get the following picture.

It is possible to associate to each affine TV-scheme $X=\Spec_{TV} A$ a topological space $Y$ whose locale is the locale of $X$ (defined by the Zariski open subsheaves as introduced in section \ref{tvschemes}) and such that $Y$ is homeomorphic to the topological space $\Spec_\cM A$ as considered in section \ref{mschemes}. More precisely, this construction yields a functor
$$ \cM_1: \ \{\text{affine TV-schemes}\} \ \longrightarrow \ \{\text{affine $\cM$-schemes}\} $$
that is an equivalence of categories with inverse $\cM_2:\Spec_\cM A \mapsto \Hom(A,-)$. Moreover, Marty also proves in \cite{Marty2007} that the notions of open immersions for $\cM$-schemes and TV-schemes coincide.

It seems to be likely, but it is not obvious, that the functors $\cM_1$ and $\cM_2$ extend to an equivalence of categories
$$ \xymatrix{\{\text{TV-schemes}\}\ar@{-->}@<0,5ex>[rr]^{\cM_1} && \{\text{$\cM$-schemes}\} \ar@{-->}@<0,5ex>[ll]^{\cM_2}}. $$

%%%%%%%%%%%%%%%%%%%%%%%%%%%%%%%%%%%%%

\subsection{$\cM$-schemes as $\cM_0$- and CC-schemes}
\label{mtocc}

The category of $\cM$-schemes embeds into the category of CC-schemes. We proceed in two steps.

Firstly, we construct a functor from $\cM$-schemes to $\cM_0$-schemes. Consider the fully faithful functor $\cM\stackrel{+0}\longrightarrow\cM_0$ that associates to a monoid $A$ the monoid $A_{+0}=A\cup\{0\}$ with $0$ whose multiplication is extended by $0\cdot a=0$ for every $a\in A_{+0}$. This induces a faithful functor
	\[
		\{\text{monoidal spaces}\} \ \stackrel{+0}\longrightarrow \ \{\text{monoidal spaces with $0$}\}, 
	\]
that we denote by the same symbol, by composing the structure sheaf $\cO_X$ of a monoidal space $X$ with $\cM\stackrel{+0}\longrightarrow\cM_0$. We reason in the following that this functor restricts to a functor from the category of $\cM$-schemes to the category of $\cM_0$-schemes.

The prime ideals of a monoid $A$ (cf.\ section \ref{mschemes}) coincide with the prime ideals of $A_{+0}$ (cf.\ section \ref{ccschemes}) since we asked a prime ideal of a monoid with $0$ to contain $0$. Recall that localizations, the Zariski topology, monoidal spaces (without and with $0$) for $\cM$-schemes and $\cM_0$-schemes are defined in complete analogy. Thus the above functor restricts to a faithful functor
	\[ 
		\{\text{$\cM$-schemes}\} \ \stackrel{+0}\longrightarrow \ \{\text{$\cM_0$-schemes}\}.
	\]

Secondly, we define the functor 
	\[ 
		\cF: \{\text{$\cM_0$-schemes}\} \ \longrightarrow \ \{\text{CC-schemes}\}
	\]
as sending an $\cM_0$-scheme $X$ to the CC-scheme $(X,X_\Z,\id_{X_\Z})$. A morphism $\varphi:Y\to X$ of $\cM_0$-schemes defines the morphism $(\varphi,\varphi_\Z)$ of CC-schemes. The condition that
$$ \xymatrix{Y_\Z \ar[rr]^{\varphi_\Z}\ar[d]_{\id_{Y_\Z}} && X_\Z\ar[d]^{\id_{X_\Z}} \\ Y_\Z \ar[rr]^{\varphi_\Z} && X_\Z} $$
commutes is trivially satisfied and shows that $\cF$ is faithful. The composition of the functors $+0$ and $\cF$ yields a functor
$$ \{\text{$\cM$-schemes}\} \ \longrightarrow \ \{\text{CC-schemes}\}. $$

%%%%%%%%%%%%%%%%%%%%%%%%%%%%%%%%%%%%%

\subsection{$\mathcal{M}$- and $\mathcal{M}_0$-schemes as $\F$-schemes}
\label{mtoh}

Let $M$ be a monoid with 0. We can define an $\F$-ring $\F\langle M\rangle$ by setting $\F\langle M\rangle_{Y,X} = \Hom_{\F\langle M\rangle}(X,Y)$, that is, the set of $Y\times X$ matrices with values in $M$ with at most one nonzero entry in every row and column. This is an $\F$-ring with composition defined by matrix multiplication. This is possible because all sums that occur in the product of two matrices with at most one non-zero entry in every row and column range over at most one term. This construction yields a faithful functor 
$$\F\langle-\rangle: \cM_0 \lto \{\F\text{-rings}\}$$ 
(cf.\ \cite[\S 2.3, Example 2]{Haran2007}), which has a right adjoint functor, namely, the functor $-_{[1],[1]}$ that takes any $\F$-ring $A$ to the monoid $A_{[1],[1]}=\Hom_A([1],[1])$. Composition with the functor $+0$ from section \ref{mtocc} yields a faithful functor from $\cM$ to the category of $\F$-rings, which admits a right adjoint given by composing $-_{[1],[1]}$ with the forgetful functor.

	Since ideals, localization and gluing in the category of $\F$-rings are defined in terms of the underlying monoid $A_{[1],[1]}$, which is in complete analogy to the construction of $\cM_0$-schemes, we obtain a faithful functor
$$\F\langle-\rangle: \{\cM_0\text{-schemes}\} \lto \{\F\text{-schemes}\}.$$ 
Composing with $+0$ yields a faithful functor from the category of $\cM$-schemes to the category of $\F$-schemes.

%%%%%%%%%%%%%%%%%%%%%%%%%%%%%%%%%%%%%

\subsection{$\cM_0$-schemes as generalized schemes with zero}
\label{mtodurov}

In this section we mention some relations between the category of $\cM$-schemes and the one of Durov's generalized schemes with zero. What follows is based on a work in progress by Peter Arndt (cf.\ \cite{ArndtUN}).

Associated to any monoid with zero $M$, we can construct the monad $T_M:\Sets \to \Sets$ that takes any set $X$ into $T_M(X):=(M\times X)/\!\!\sim$, where we identify all the elements of the form $(0,x)$ and assume that if $X= \varnothing$ the quotient consists of one element (the empty class). This monad is algebraic since cartesian products commute with filtered colimits, so we have a functor $T_{-}$ from $\cM_0$ to the category of generalized rings with $0$. The functor $T_{-}$ has as a right adjoint, namely, the functor $|-|$ that associates to a generalized ring $A$ its underlying monoid $|A| := A([1])$.

The functor $T_{-}$ commutes with localizations because localizations in generalized rings are defined in terms of localizations of the underlying monoids. Thus it naturally extends to an embedding of categories
$$ T_-:\{\cM_0\text{-schemes}\}\longrightarrow\{\text{Generalized schemes with }0\}. $$

%%%%%%%%%%%%%%%%%%%%%%%%%%%%%%%%%%%%%

\subsection{Relation between Durov's generalized rings with zero and Haran's $\F$-rings}
\label{htodurov}

In this section, we explain the relation between the categories of generalized rings with zero defined by Durov and the one of $\F$-rings defined by Haran, following some remarks by Durov (cf.\ \cite[\S 5.3.25]{Durov2007}) and a work in progress by Peter Arndt (cf.\ \cite{ArndtUN}).

Given a generalized ring with zero $T$, we can construct the $\F$-ring $T^{\cD}$ defined by the sets of morphisms
	\[
		T^{\cD}_{Y,X} = \Hom_{T^{\cA}}(X,Y):= \left\{T(f)|\ f\in \Hom_{\Sets}(X,Y)\right\}
	\]
obtained by applying $T$ to set maps between $X$ and $Y$. This construction yields a functor 
	\[ 
		\cD:\{\text{Generalized rings with }0\} \lto \{\F\text{-rings}\}, 
	\]
admitting a left adjoint $\cA$, where for every $\F$-ring $R$ the monad $\cA_R$ is defined by $\cA_R([n]) := R_{[1],[n]}$.

It is worth noting that the functor $\cD$ that sends generalized rings to $\F$-rings is not monoidal. This is due to the fact that the product $\circledcirc$ in the category of $\F$-rings is not compatible with the tensor product of generalized rings, cf.\ \cite[Remark 7.19]{Haran2007} and \cite{ArndtUN} for further details.

It seems to be likely that the pair of functors above lift to the corresponding categories of schemes, providing an adjunction
	\[
		\xymatrix{
			\{\text{Generalized schemes with }0\}\ar@<.5ex>[rr]^<<<<<<<<<<{\cD} && \{\F\text{-schemes}\} \ar@<.5ex>[ll]^<<<<<<<<<<{\cA}
		}.
	\]

One of the interesting applications of the functor $\cA$ is that we can define a base change functor from $\F$-rings to usual rings by composing $\cA$ with Durov's base change functor. This extension of scalars is left adjoint to the inclusion of rings into $\F$-rings. Complete details on this extension of scalars functor and its properties will be given in \cite{ArndtUN}.

%%%%%%%%%%%%%%%%%%%%%%%%%%%%%%%%%%%%%

\subsection{$\cM$-schemes as $\Lambda$-schemes}
\label{mtolambda}

In \cite{Borger2009}, Borger describes different examples of (flat) $\Lambda$-schemes. We will recall his constructions briefly to explain, why the category of $\cM$-schemes embeds into the category of $\Lambda$-schemes.

Given a monoid $A$, we can endow the (flat) scheme $X=\Spec\Z[A]$ with the following $\Lambda$-scheme structure $\Phi=\{\varphi_p: X\to X\}_{p\text{ prime}}$: for each prime $p$, the endomorphism $\varphi_p:X\to X$ is induced by the algebra map $\Z[A]\to\Z[A]$ given by $a\to a^p$ for every $a\in A$. Every morphism of monoids $A\to B$ induces therefore a morphism of the associated $\Lambda$-schemes $\Spec\Z[B]\to\Spec\Z[A]$.

Borger remarks in \cite[section 2.3]{Borger2009} that all small colimits and limits of $\Lambda$-schemes exist and that they commute with the base extension to $\Z$. Every $\cM$-scheme $X$ has an open cover by affine $\cM$-schemes $X_i$. By definition of the base extension to $\Z$ this yields an open cover of $X_\Z$ by $X_{i,\Z}$. Since the $X_{i,\Z}$ have a structure of a $\Lambda$-scheme and $X_\Z$ is the limit over the $X_{i,\Z}$ and their intersections, $X$ inherits the structure of a $\Lambda$-scheme. This yields the functor
$$ \cB: \ \{\cM\text{-schemes}\} \ \longrightarrow \  \{\Lambda\text{-schemes}\}. $$

Note, however, that $\cB$ is not essentially surjective as certain quotients of $\Lambda$-schemes are $\Lambda$-schemes that are not induced by an $\cM$-scheme. See \cite[sections 2.5 and 2.6]{Borger2009} for details.

%%%%%%%%%%%%%%%%%%%%%%%%%%%%%%%%%%%%%

\subsection{Toric varieties and affinely torified varieties}
\label{torictotorified}

As it was mentioned in section \ref{torifsch}, the orbit decomposition $\bdu_{\tau\in \Delta}T_\tau = \Spec \Z[A_\tau^\times]\lto X$ provides an affine torification of a toric variety $X$ with fan $\Delta$ (for details, cf.\ \cite[\S 1.3.3]{LL}). Moreover, every toric morphism between toric varieties induces a morphism between the corresponding affinely torified varieties.

In other words, we obtain an embedding of categories 
$$\iota: \{\text{Toric varieties}\} \lto \{\text{Affinely torified varieties}\}.$$

%%%%%%%%%%%%%%%%%%%%%%%%%%%%%%%%%%%%%

\subsection{Relation between affinely torified varieties, S-varieties and its variations}
\label{torifiedtos}

The relation between affinely torified varieties and S-varieties or CC-varieties is established in \cite[\S 2.2, \S 3.3]{LL}. Theorems 3.11 and 2.10 in loc.\ cit.\ provide embeddings $\mathcal{S}$ and $\mathcal{L}$ of the category of affinely torified varieties into the category of S-varieties resp.\ CC-varieties. 

In order to show that the above functors define varieties over $\Fun$ in the sense of Soul\'e or Connes-Consani, it is necessary to show that the universal property holds, which boils down to prove that a certain morphism $\vphi_\C:X \to V$ of affine complex varieties is actually defined over the integers, where $X$ is an affinely torified variety and $V$ is an arbitrary variety. The main idea of the proof is to consider for each irreducible component of $X$ the unique open torus $T_i$ in the torification of $X$ that is contained in the irreducible component (cf.\ \cite[Corollary 1.4]{LL}). Since a split torus satisfies the universal property of an S-variety resp.\ a CC-variety (cf.\ \cite{Soule2004} and \cite{Connes2008}), the morphism ${\vphi_\C}_{|T_i}$ is defined over $\Z$ and extends to a rational function $\psi$ on $X$ defined over the integers with the locus of poles contained in the complement of $T_i$. But the extension to $\C$ of the rational function is $\vphi_\C$, which has no poles. Thus $\psi$ cannot have a pole and is a morphism of schemes. 

Note that this proof that works for both S-varieties and CC-varieties can also be adopted to S${}^\ast$-varieties. This yields an embedding $\mathcal S^\ast$ of the category of affinely torified varieties into the category of S${}^\ast$-varieties.

In \cite{LL}, we find the construction of a partial functor $\mathcal{F}_{CC\to S}$ mapping CC-varieties to ``S-objects'', which, however, do not have to satisfy the universal property of an S-variety. In a similar fashion, replacing the complex algebra $\cA_X$ by the complexified scheme $X_\Z\otimes \C$, or conversely replacing the complex scheme $X_\C$ by the algebra of global sections, provide ways to compare the categories of CC-varieties and $\textrm{S}^\ast$-varieties. But there are technical differences between the two categories that prevent this correspondence to define a functor. Namely, the notions of gluing affine pieces are different and it is not clear how to define the grading for an CC-variety. Regarding the other direction, going from the complex scheme $X_\C$ to the algebra of global functions is a loss of information if the CC-variety is not affine. All these issues suggest that the reader should consider these categories as similar in spirit, but technically different.

%The following result, proven in \cite[Proposition 4]{Soule2004} for S-varieties, translates almost verbatim to $\textrm{S}^\ast$-varieties:

%\begin{proposition}
%	Let $\cX=(\uline{X}, \cA_X, \ev_X)$ be an $\textrm{S}^\ast$-variety such that the immersion $i: \cX \to \Ob(X_\Z):=(X_\Z(\Z[-]), \O(X_\Z\ot \C), \ev)$ decomposes as $i=j\circ u$ with $u:\cX\to \mathcal{Y}$ and $j:\mathcal{Y}\to \Ob(X_\Z)$ immersions with $\uline{u}$ an equivalence of functors, then $\mathcal{Y}$ is another $\textrm{S}^\ast$-variety satisfying $\mathcal{Y}\ot_{\Fun}\Z = X_\Z$.
%\end{proposition}

%As a consequence of the previous proposition, if $\cX = (\uline{X}, \cA_X,\ev_X)$ is an $\textrm{S}^\ast$-variety with $\cX\ot_{\Fun}\Z= X_\Z$, then we can always replace the algebra $\A_X$ by the algebra of global sections $\O(\X_\Z\otimes \C)$, and obtain a new variety over $\Fun$ with the same extension of scalars. The transformation that replaces the algebra $\O(\X_\Z\otimes \C)$ by the scheme $\X_\Z\otimes \C$ provides then a functor
%	\[
%		\{\textrm{$\textrm{S}^\ast$-varieties}\} \lto \{\textrm{CC-varieties}\}
%	\]

%%%%%%%%%%%%%%%%%%%%%%%%%%%%%%%%%%%%%

\subsection{Generalized torified schemes and CC-schemes}
\label{torifiedtocc}

Torified schemes are closely connected to Connes and Consani's notion of $\Fun$-schemes. In this section, we generalize the notion of a torified scheme, which allows the sheaves to have multiplicative torsion and which provides an easier setting to compare the category of torified schemes with the category of CC-schemes.

A \dtext{generalized torified scheme} is a triple $\cX = (\widetilde{X}, X, e_X)$ where $\widetilde{X}$ is a geometric $\cM_0$-scheme, $X$ is a scheme, and $e_X:\widetilde{X}_\Z \to X$ is a morphism of schemes such that for every field $k$ the map $e_X(k):\widetilde{X}_\Z(k) \to X(k)$ is a bijection. A \dtext{morphism of torified schemes} is a pair of morphisms $\widetilde{\vphi}:\widetilde{X} \to \widetilde{Y}$ (morphism of $\cM_0$-schemes) and $\vphi:X\to Y$ (morphism of schemes) such that the diagram 
	\[
		\xymatrix{
			\widetilde{X}_\Z \ar[rr]^{\widetilde{\vphi}} \ar[d]_{e_X}&& \widetilde{Y}_\Z \ar[d]^{e_Y} \\
			X \ar[rr]^{\vphi}&& Y. 
		}
	\]
commutes. We denote the natural inclusion of the category of affinely torified varieties into the category of generalized torified schemes by $\iota$. We also have an obvious inclusion
	\[
		\cF':\{\textrm{$\cM_0$-schemes}\} \lto \{\textrm{generalized torified schemes}\}
	\]
constructed in the same fashion as the functor $\cF$ in section \ref{mtocc}.

Let $X$ be a scheme together with a torification $e_X:T=\bdu_i \Gm^{d_i} \lto X$. Then $T=\widetilde X_\Z$ for the $\cM_0$-scheme $\widetilde X= \bdu_i \mathbb{G}_{m,\cM_0}^{d_i}$. Using this description, the relation between (generalized) torified schemes and CC-schemes becomes apparent.

\begin{theorem}
	The functor
	\begin{eqnarray*}
		\mathcal{I} : \{\text{generalized torified schemes}\} & \lto & \{\text{CC-schemes}\} \\
		(\widetilde{X},X,e_X) & \lmto & (\uline{\widetilde{X}}(-), \uline{X}(-), \uline{e}_X)
	\end{eqnarray*}
	is an equivalence of categories.
\end{theorem}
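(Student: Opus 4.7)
The functor $\mathcal{I}$ decomposes as the assembly of two standard equivalences on each component of the triple, followed by a compatibility check for the evaluation datum. On the $\cM_0$ side, sending a geometric $\cM_0$-scheme $\widetilde X$ to its representable $\cM_0$-functor $\uline{\widetilde X}(-)$ is an equivalence by the Connes-Consani result \cite[Prop.\ 3.16]{Connes2009} recalled in section \ref{ccschemes}. On the scheme side, the Yoneda embedding $X\mapsto \uline X(-)$ is an equivalence between the category of schemes and the category of functors on rings representable by schemes. Granted both, the only remaining content is to show that the evaluation data match up.

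First I would check that $\mathcal{I}$ is well defined and functorial, which is essentially formal: $\uline{e}_X$ is induced from $e_X$ by functoriality, and the field-point bijection condition is preserved verbatim. Functoriality of $\mathcal{I}$ on morphisms is immediate from the functoriality of each component.

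For essential surjectivity, given a CC-scheme $(\widetilde{\cF},\cY,\ev)$, I would produce $\widetilde X$ and $X$ by applying the two componentwise equivalences backwards, giving isomorphisms $\widetilde{\cF}\cong\uline{\widetilde X}(-)$ and $\cY\cong\uline X(-)$. To recover a morphism $e_X:\widetilde X_\Z\to X$ inducing $\ev$, I need a natural identification $\widetilde{\cF}_\Z\cong \uline{\widetilde X_\Z}(-)$; granted this, $\ev$ becomes a natural transformation between the functors of points of two $\Z$-schemes, which Yoneda converts to the required morphism of schemes. For full faithfulness, a morphism of CC-schemes is a pair of natural transformations satisfying a commuting square; each component corresponds bijectively (by the respective equivalence) to a morphism of geometric $\cM_0$-schemes and a morphism of schemes, and the commutativity of the square passes back and forth unchanged.

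The main obstacle, and the only real content beyond citations, is establishing the natural isomorphism $(\uline{\widetilde X}(-))_\Z \cong \uline{\widetilde X_\Z}(-)$ of functors on rings, for every geometric $\cM_0$-scheme $\widetilde X$; that is, the compatibility of the two base-extension-to-$\Z$ constructions with the equivalence $\widetilde X \leftrightarrow \uline{\widetilde X}(-)$. In the affine case $\widetilde X=\Spec_{\cM_0} A$ both sides are represented by $\Spec(\Z[A]/(1\cdot 0_A - 0_{\Z[A]}))$, and the isomorphism is a restatement of the adjunction between $-\otimes_{\cM_0}\Z$ and the forgetful functor from rings to monoids with zero. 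For general $\widetilde X$, I would choose an affine open cover $\{\widetilde X_i\}$, invoke the affine case on each piece, and verify that both constructions glue compatibly along the intersections $\widetilde X_i\cap\widetilde X_j$; this reduces to the fact that base extension is defined so as to preserve affine open covers (section \ref{ccschemes}) and that representable $\cM_0$-functors glue along their open subfunctors in the sense of the same section. No genuinely new ideas are required past this bookkeeping.
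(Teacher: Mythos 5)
Your proposal is correct and follows essentially the same route as the paper's own (very terse) proof, which likewise rests on exactly two facts: the Connes--Consani result \cite[Prop.\ 3.16]{Connes2009} identifying $\cM_0$-schemes with geometric $\cM_0$-schemes, and Yoneda's lemma to convert natural transformations into morphisms of the representing objects, together with the observation that the field-point bijection condition is literally the same on both sides. The only thing you add is the explicit verification that $(\uline{\widetilde X}(-))_\Z \cong \uline{\widetilde X_\Z}(-)$, a compatibility the paper leaves implicit; your affine-case-plus-gluing argument for it is sound.
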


\begin{proof}
	First of all note that the objects of both categories satisfy the condition that $\widetilde X_\Z(k)=X(k)$ for every fields $k$. The theorem follows from the facts that every $\cM_0$-scheme $\widetilde X(-)$ is represented by a geometric $\cM_0$-scheme (\cite[Prop.\ 3.16]{Connes2009}) and that a natural transformation of representable functors is induced by a morphism between the representing objects (Yoneda's lemma).
\end{proof}

This theorem establishes toric varieties, Grassmann and Schubert varieties and split reductive groups as CC-schemes. We have an immediate equivalence $\mathcal{I}\circ \cF' \simeq \cF$. 

\begin{remark}\label{rem_gen_tor}
	If $\cX =(\widetilde{X}, X, e_X)$ is a generalized torified scheme, then $\cX$ is torified if and only if there is an isomorphism $\widetilde{X}\cong \bdu_{x\in \widetilde{X}} \mathbb G_{m,\cM_0}^{\rk x}$ where $\rk x$ is the rank of the group $\cO_{\widetilde X,x}^\times$ and if the restriction $e_X{}_{\mid \mathbb G_{m}^{\rk x}}$ is an immersion for every $x\in \widetilde X$. As a consequence we obtain the following result.
\end{remark}

\begin{corollary}
	Let $\cX = (\widetilde{X}, X, e_X)$ be a CC-scheme. If $\widetilde{X}$ is integral and torsion-free and if $e_X{}_{\mid \widetilde Y_\Z}$ is an immersion for every connected component $\widetilde Y$ of $\widetilde X$, then $X=\cX_\Z$ is torified.
\end{corollary}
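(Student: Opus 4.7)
The plan is to torify each connected component of $\widetilde X$ by invoking the orbit decomposition of an associated toric variety, and then to combine these pieces using the immersions supplied by the hypothesis.

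First I would translate the hypotheses on $\widetilde X$ through the fully faithful functor $+0:\cM\to\cM_0$ of Section \ref{mtocc}. Since the prime ideals of a monoid with $0$ correspond bijectively to those of its underlying monoid without zero, and since the two base extensions to $\Z$ agree, the conditions ``integral'' and ``torsion-free'' for an $\cM_0$-scheme become ``integral'' and ``of exponent $1$'' for the corresponding $\cM$-scheme. Assuming the (implicit) finite type hypothesis on CC-schemes, each connected component $\widetilde Y$ of $\widetilde X$ then corresponds via the inverse of the equivalence $\mathcal K$ of Theorem \ref{thm_torictom} to a toric variety $\widetilde Y_\Z$.

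Next, I would exploit the canonical affine torification of a toric variety by its orbit decomposition (Section \ref{torictotorified}) to produce, for each connected component, a morphism $T_Y=\bdu_\tau \Gm^{d_\tau}\to\widetilde Y_\Z$ whose restrictions to each $\Gm^{d_\tau}$ are immersions and which is a bijection on $k$-points for every field $k$. Composing with the hypothesized immersion $e_X\vert_{\widetilde Y_\Z}:\widetilde Y_\Z\hookrightarrow X$ yields, for each $\tau$ and each component $\widetilde Y$, an immersion $\Gm^{d_\tau}\hookrightarrow X$, using that immersions are stable under composition. Taking the disjoint union over all $\widetilde Y$ and $\tau$, I obtain $T:=\bdu_{Y,\tau}\Gm^{d_\tau}\to X$. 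The chain of identifications
$$T(k)\;=\;\bdu_Y T_Y(k)\;=\;\bdu_Y \widetilde Y_\Z(k)\;=\;\widetilde X_\Z(k)\;=\;X(k),$$
in which the second equality uses that each $T_Y\to\widetilde Y_\Z$ is a torification and the last uses the CC-scheme bijection $e_X(k):\widetilde X_\Z(k)\to X(k)$, shows that $T\to X$ is bijective on $k$-points and hence is a torification of $X$.

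The main obstacle is not any deep content but rather the careful bookkeeping required to reinterpret $\widetilde X$ as an $\cM$-scheme so that Theorem \ref{thm_torictom} applies, and to pin down (or supply) the implicit finite type hypothesis needed for the equivalence with toric varieties. Once those identifications are in place, the argument reduces to combining the orbit torification of toric varieties with stability of immersions under composition, glued together by the $k$-point bijection that is built into the definition of a CC-scheme.
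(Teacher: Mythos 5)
Your argument is correct and is essentially the derivation the paper intends: the paper deduces this corollary directly from Remark~\ref{rem_gen_tor}, whose decomposition of $\widetilde X$ into tori $\mathbb G_{m,\cM_0}^{\rk x}$ indexed by the points $x$ (with $\rk x$ the rank of $\cO_{\widetilde X,x}^\times$) is exactly the orbit decomposition you recover by passing through Theorem~\ref{thm_torictom} and Section~\ref{torictotorified}. Your explicit flagging of the implicit finite-type hypothesis, together with the composition-of-immersions step and the chain of bijections on $k$-points, fills in details that the paper leaves to the reader with the phrase ``as a consequence.''
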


\begin{remark}
\label{non-affine_torification}
The torification of $\Gr(2,4)$ given by a Schubert decomposition is not affine (cf.\ \cite[Section 1.3.4]{LL}), what makes it unlikely that the corresponding CC-scheme has an open cover by open sub-CC-schemes of the form $(\widetilde X,X,e_X)$ such that both $\widetilde X$ and $X$ are affine. This is a reason for the flexibility of CC-schemes, and it seems that the category of CC-schemes is different to categories that are obtained by gluing affine pieces. In particular, cf.\ Remark \ref{rem_salch}.
\end{remark}

%%%%%%%%%%%%%%%%%%%%%%%%%%%%%%%%%%%%%%%%%%%%%%	

\subsection{The map of $\Fun$-land}
\label{map}

The functors that we described in the previous section are illustrated in the commutative diagram in Figure \ref{bigpicture} of categories and functors. Note that we place the category of schemes on the outside in order to have a better overview of the different categories. In Figure \ref{bigpicture}, all the extensions of scalars from $\Fun$ to $\Z$ are denoted by $-\ot_{\Fun} \Z$, the functor $\iota$ is the canonical inclusion of (affinely) torified schemes into generalized torified schemes. All the other functors have been defined in the previous sections. 
As we are dealing with categories and functors, the diagram is commutative only up to isomorphism. Some of the functors admit adjoints also described in the text, that in many cases have been left out of the diagram for the sake of clarity. The dashed arrows corresponding to the functors $\cD$ and $\cA'$ represent a work in progress. 
	
A few remarks about the commutativity (up to isomorphism) of the diagram. Any path that starts at toric varieties and ends up at schemes will always produce the toric variety itself, so all these possible paths are equivalent. Commutativity of subdiagrams involving S-varieties, CC-varieties was proven in \cite{LL}, and extends verbatim to $\textrm{S}^\ast$-varieties. The commutativity of the triangle involving CC-schemes, generalized schemes with 0 and H-schemes (also involving the functor $\cA$ adjoint of $\cD$ is explained in Arndt's work \cite{ArndtUN}. The equivalence $(-\ot_{\Fun}\Z) \circ \mathcal{B}\simeq (-\ot_{\Fun}\Z) \circ \mathcal{M}_1$ follows immediately from the definitions.

   \begin{figure}[hbtp]
    \begin{center}
    \includegraphics[width=1.0\textheight,angle=90]{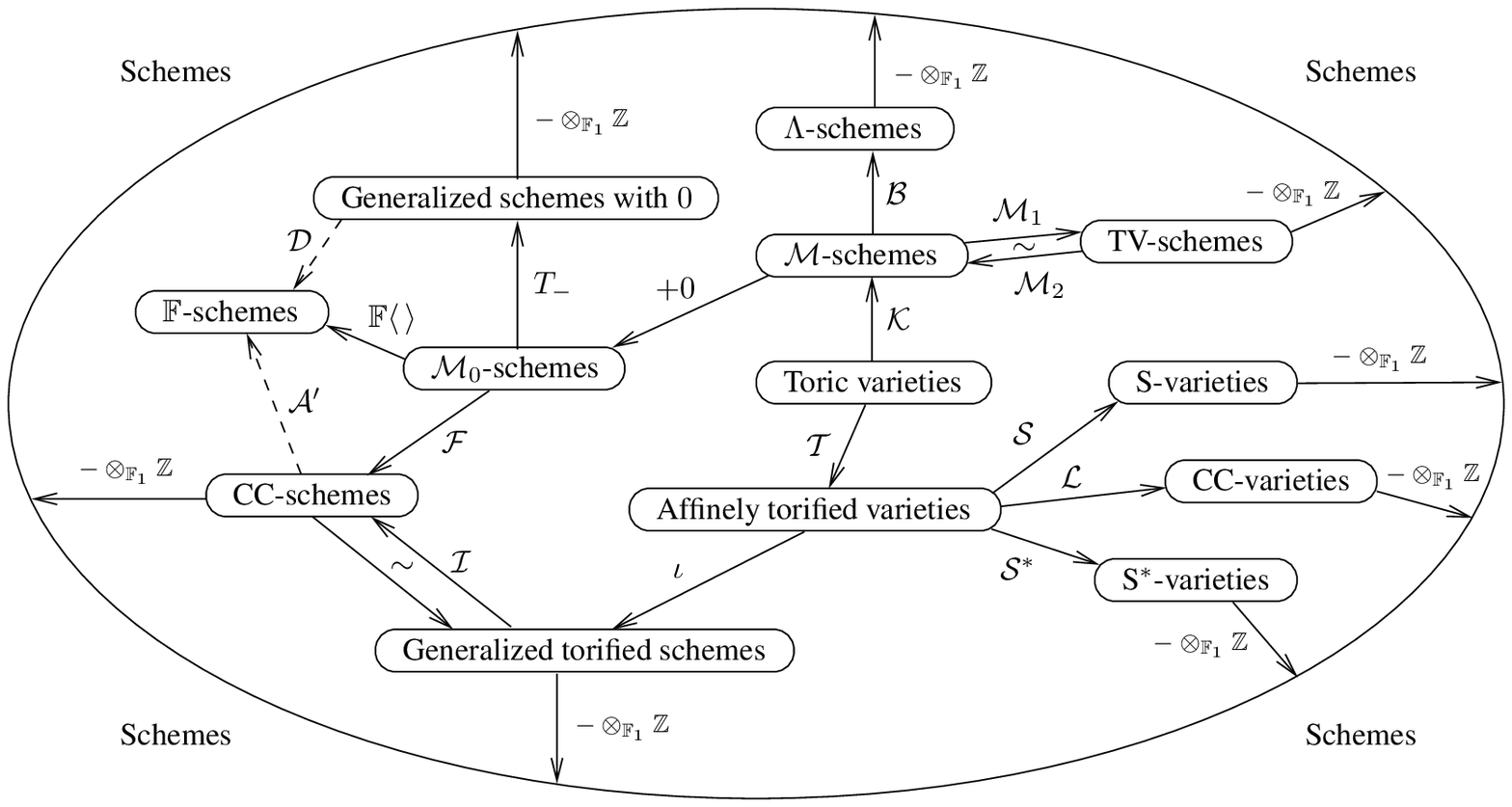}
     \caption{The map of $\Fun$-land} \label{bigpicture}
    \end{center} 
   \end{figure}

%%%%%%%%%%%%%%%%%%%%%%%%%%%%%%%%%%%%%%%%%%%%%

\subsection{Algebraic groups over $\Fun$} 
\label{algebraic_groups}

The aim of Connes and Consani's paper \cite{Connes2008} as described in section \ref{ccvarieties} was to realize Tits' idea from \cite{Tits1957} of giving the Weyl group of a split reductive group scheme (cf.\ \cite[Expos\'e XIX, Def.\ 2.7]{SGA3}) an interpretation as a ``Chevalley group over $\Fun$''. Connes and Consani obtained partial results, namely, that every split reductive group scheme $G$ has a model ``over $\F_{1^2}$'' and that the normalizer $N$ of a maximal split torus $T$ of $G$ can be defined as a group object over $\F_{1^2}$. However, there is no model of $G$ as an algebraic group over $\Fun$ or $\F_{1^2}$. For a further discussion of possibilities and limitation in this context, see \cite[section 6.1]{LL}.

In the category of CC-schemes, the same reasons prevent split reductive group schemes (except for tori) to have a model as an algebraic group over $\Fun$. However, the second author of this paper showed that this category is flexible enough to invent new notions of morphisms that yield the desired results. We review the definitions and the main result  from \cite{Lorscheid2009}.

Let $\widetilde X$ be an $\cM_0$-scheme. Recall the definition of the rank $\rk x$ of $x\in\widetilde X$ from Remark \ref{rem_gen_tor}. We define the sub-$\cM_0$-scheme $\widetilde X^\rk\hookrightarrow \widetilde X$ as the disjoint union $\coprod \Spec_{\cM_0}\cO_{\widetilde X,x}^\times$ over all points $x\in\widetilde X$ of minimal rank. A \emph{strong morphism} between $\Fun$-schemes $(\widetilde Y,Y,e_Y)$ and $(\widetilde X,X,e_X)$ is a pair $(\widetilde\varphi,\varphi)$, where $\widetilde\varphi:\widetilde Y^\rk\to\widetilde X^\rk$ is a morphism of $\cM_0$-schemes and $\varphi:Y\to X$ is a morphism of schemes such that the diagram  
$$\xymatrix{\widetilde Y^\rk_\Z\ar[rr]^{\widetilde f_\Z}\ar[d]_{e_Y} && \widetilde X^\rk_\Z\ar[d]^{e_X}\\ Y\ar[rr]^f && X} $$
commutes.

The morphism $\spec\O^\times_{X,x}\to\ast_{\cM_0}$ to the terminal object $\ast_{\cM_0}=\Spec_{\cM_0}\{0,1\}$ in the category of $\cM_0$-schemes induces a morphism 
$$ t_{\widetilde X}: \quad \widetilde X^\rk \ = \coprod_{x\in\widetilde X^\rk}\spec\O^\times_{X,x} \quad \longrightarrow \quad \ast_{\widetilde X} \ := \coprod_{x\in\widetilde X^\rk}\ast_{\cM_0}. $$
Given a morphism $\widetilde \varphi:\widetilde Y^\rk\to\widetilde X^\rk$ of $\cM_0$-schemes, there is thus a unique morphism $t_{\widetilde \varphi}:\ast_{\widetilde Y}\to\ast_{\widetilde X}$ such that $t_{\widetilde\varphi}\circ t_{\widetilde Y}=t_{\widetilde X}\circ\widetilde\varphi$. Let $X^\rk$ denote the image of $e_X: \widetilde X^\rk_\Z\to X$. A \emph{weak morphism} between $\Fun$-schemes $(\widetilde Y,Y,e_Y)$ and $(\widetilde X,X,e_X)$ is a pair $(\widetilde\varphi,\varphi)$, where $\widetilde\varphi:\widetilde Y^\rk\to\widetilde X^\rk$ is a morphism of $\cM_0$-schemes and $\varphi:Y\to X$ is a morphism of schemes such that the diagram  
$$\xymatrix@R=1pc{\widetilde Y^\rk_\Z\ar[rr]^{\widetilde \varphi_\Z}\ar[dr]_{t_{\widetilde Y,\Z}} && \widetilde X^\rk_\Z\ar[dr]^{t_{\widetilde X,\Z}}\\
                 &(\ast_{\widetilde Y})_\Z\ar[rr]^{t_{\widetilde\varphi,\Z}} && (\ast_{\widetilde X})_\Z \\  Y^\rk\ar[rr]^\varphi\ar[ur] && X^\rk\ar[ur]} $$
commutes. 

If $\cX=(\widetilde X,X,e_X)$ is an CC-scheme, then we define \emph{the set of $\Fun$-points $\cX(\Fun)$} as the set of strong morphism from $(\ast_{\cM_0},\Spec\Z,\id_{\Spec\Z})$ to $(\widetilde X,X,e_X)$. An \emph{algebraic group over $\Fun$} is a group object in the category whose objects are $\Fun$-schemes and whose morphisms are weak morphisms. The base extension functor $-\otimes_\Fun\Z$ from this category to the category of schemes is given by sending $\cX=(\widetilde X,X,e_X)$ to $\cX_\Z=X$. On the other hand, the group law of an algebraic group $\cG$ over $\Fun$ induces a group structure on the set $\cG(\Fun)$. We realize Tits' idea in the following form.

\begin{theorem}[{\cite[Thm.\ 7.9]{Lorscheid2009}}] \label{split_reductive_over_fun}
 For every split reductive group scheme $G$ with Weyl group $W$, there exists an algebraic group $\cG$ over $\Fun$ such that $\cG_\Z\simeq G$ as group schemes and $\cG(\Fun)\simeq W$ as groups.
\end{theorem}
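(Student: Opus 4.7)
The plan is to build $\cG = (\widetilde{G}, G, e_G)$ from the Bruhat decomposition of $G$ and to exploit the flexibility of weak morphisms to equip it with a group structure realizing $W$ at the level of $\Fun$-points.

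First, I would fix a split maximal torus $T$ and a Borel subgroup $B\supset T$ of $G$, and recall the Bruhat decomposition $G = \coprod_{w\in W} BwB$, where each cell $BwB$ is isomorphic as a scheme to a product of $T$ and affine spaces. Applying in each cell the standard affine-space torification $\A^n = \coprod_{S\subseteq\{1,\dots,n\}}\Gm^{|S|}$ produces an affinely torified variety structure on $G$, and by the equivalence $\mathcal{I}$ of section \ref{torifiedtocc} this in turn determines a CC-scheme $\cG = (\widetilde{G}, G, e_G)$ with $\cG_\Z \simeq G$.

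Next, I would identify $\cG(\Fun)$ with $W$ as a set. The points of $\widetilde{G}$ of minimum rank are the generic points of the ``core tori'' of the Bruhat cells, giving one copy of $\Spec_{\cM_0}(X^\ast(T))_{+0}$ for each $w\in W$. A strong morphism $(\ast_{\cM_0},\Spec\Z,\id_{\Spec\Z})\to\cG$ selects such a component $w\in W$ via its $\cM_0$-part, and the strong compatibility $e_G\circ\widetilde{\varphi}_\Z = \varphi$ then forces the $\Z$-point of $G$ to be the image under $e_G$ of the identity point of the $w$-th torus, which is a fixed Tits representative $n_w\in N(T)(\Z)$ of $w$. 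This produces a bijection $\cG(\Fun)\simeq W$.

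Third, I would endow $\cG$ with a group structure in the weak morphism category, using the scheme-theoretic group laws of $G$ as the scheme components and defining $\widetilde{\mu}\colon(\widetilde{G}\times\widetilde{G})^\rk\to\widetilde{G}^\rk$ on the $(w_1,w_2)$-component as the morphism into the $w_1 w_2$-component induced by multiplication of Tits representatives, and analogously for the unit and the inverse. The weak morphism condition demands compatibility only after composition with the terminal reduction $t_{\widetilde{G}}$, i.e.\ after retaining only which $W$-indexed stratum is hit, and this is supplied by the identity $n_{w_1} n_{w_2}\in B(w_1 w_2)B$ which holds for a consistent choice of Tits representatives. The group axioms then reduce to those of $W$ in the $\cM_0$-part and of $G$ in the scheme part, and the induced group structure on $\cG(\Fun)$ coincides by construction with the multiplication of $W$.

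The main obstacle is the third step. The scheme multiplication of $G$ is grossly incompatible with the Bruhat stratification, since a generic product of elements of $Bw_1 B$ and $Bw_2 B$ lies in some $BvB$ with $v$ strictly below $w_1 w_2$ in the Bruhat order, not in $B(w_1 w_2)B$ itself. A group structure on $\cG$ in the strong morphism category therefore cannot exist, and the argument hinges precisely on the relaxation offered by weak morphisms, which impose compatibility only modulo the terminal object $\ast_{\widetilde{G}}$ that records nothing more than the $W$-indexing. Verifying this relaxed compatibility for multiplication, unit and inverse, and checking that the resulting group law on $\cG(\Fun)$ is the Weyl group law, is the technical crux.
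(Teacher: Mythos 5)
Your overall strategy is the one the cited proof actually follows: torify $G$ by the Bruhat decomposition $G=\coprod_{w\in W}BwB$ with $BwB\cong U_w\times T\times U$, pass to a CC-scheme via $\mathcal I$, observe that the minimal-rank part $\widetilde G^\rk$ is a disjoint union of $\cM_0$-tori of rank $\rk T$ indexed by $W$ with $G^\rk=N(T)$, so that strong morphisms from $(\ast_{\cM_0},\Spec\Z,\id_{\Spec\Z})$ single out the $W$-indexed components together with their distinguished points $n_w\in N(T)(\Z)$, giving $\cG(\Fun)\simeq W$; the group law is then only installed as a weak morphism. Your first two steps are essentially correct.

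Where you go astray is in the diagnosis of the obstruction in the third step, which is the part you rightly identify as the crux. Both the strong and the weak morphism conditions only see the rank spaces $\widetilde G^\rk$ and $G^\rk=N(T)$; the behaviour of the multiplication of $G$ on the full Bruhat stratification (your remark that a generic product of elements of $Bw_1B$ and $Bw_2B$ falls into other cells) is irrelevant, since the diagrams to be checked live entirely over $N(T)\times N(T)\to N(T)$, and multiplication does carry $w_1T\times w_2T$ into $w_1w_2T$. The genuine obstruction to a \emph{strong} group law is different: a morphism of $\cM_0$-schemes between these rank spaces base-extends to a homomorphism of split tori on each component, hence sends identity points to identity points, so the strong compatibility $e_G\circ\widetilde\mu_\Z=\mu\circ(e_G\times e_G)$ evaluated at the distinguished points would force $n_{w_1}n_{w_2}=n_{w_1w_2}$, i.e.\ a splitting of $1\to T\to N(T)\to W\to 1$, which does not exist in general; this is exactly the limitation discussed in \cite[Section 6.1]{LL} and \cite{Connes2008}. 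The weak condition only demands that $\widetilde\mu_\Z$ and $\mu|_{N(T)\times N(T)}$ induce the same map on the $W$-indexed components, which holds because $n_{w_1}n_{w_2}\in w_1w_2T$ for \emph{any} choice of representatives --- no ``consistent choice'' of Tits representatives is needed (nor generally available). With that correction, the remaining verifications you defer (composability of weak morphisms, the group axioms, and that $t_{\widetilde\mu}$ induces the Weyl group law on $\cG(\Fun)$) go through as in \cite[Thm.\ 7.9]{Lorscheid2009}.
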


%%%%%%%%%%%%%%%%%%%%%%%%%%%%%%%%%%%%%%%%%%%%%


\begin{thebibliography}{12}

\bibitem{ArndtUN}
P.~Arndt.
\newblock{\em Comparison of $\Fun$-geometries}.
\newblock Work in progress.

\bibitem{Borger2008}
J.~Borger and B.~de~Smit.
\newblock {\em Galois theory and integral models of $\Lambda$-rings}.
\newblock Bull. Lond. Math. Soc. 40 (2008), 439--446. 

\bibitem{Borger2009}
J.~Borger.
\newblock {\em {$\Lambda$}-rings and the field with one element}.
\newblock arXiv: 0906.3146v1 [math.NT], 2009.

\bibitem{Bost1995}
J.~Bost and A.~Connes.
\newblock {\em Hecke algebras, type {III} factors and phase transitions with spontaneous symmetry breaking in number theory}.
\newblock Selecta Math. (N.S.), 1 (1995) 411--457.

\bibitem{Cohn2004}
H.~Cohn.
\newblock {\em Projective Geometry over $\mathbb F_1$ and the Gaussian Binomial Coefficients}.
\newblock arXiv: 0407.7093v1 [math.CO], 2009.

\bibitem{Connes2008a}
A.~Connes, C.~Consani, M.~Marcolli.
\newblock {\em Fun with $\Fun$}.
\newblock To appear on J. Number Thy., 2008.

\bibitem{Connes2008}
A.~Connes, C.~Consani.
\newblock {\em On the notion of geometry over $\Fun$}.
\newblock arXiv: 0809.2926v2 [math.AG], 2008.

\bibitem{Connes2009}
A.~Connes, C.~Consani. 
\newblock {\em Schemes over $\F_1$ and zeta functions}.
\newblock arXiv:0903.2024v3 [math.AG], 2009.

\bibitem{Deitmar2005}
A.~Deitmar.
\newblock {\em Schemes over {$\mathbb F\sb 1$}}.
\newblock Number fields and function fields---two parallel worlds, Progr. Math., vol. 239, 2005.

\bibitem{Deitmar2006}
A.~Deitmar.
\newblock {\em Remarks on zeta functions and {$K$}-theory over {${\bf F}\sb 1$}}.
\newblock Proc. Japan Acad. Ser. A Math. Sci., vol. 82, 2006.

\bibitem{Deitmar2007}
A.~Deitmar.
\newblock {\em {${\bf F}\sb 1$}-schemes and toric varieties.}.
\newblock Contributions to Algebra and Geometry Vol. 49, No. 2 (2008), pp. 517-525.

\bibitem{SGA3}
M. Demazure, A. Grothendieck.
\newblock {\em S\'eminaire de  G\'eom\'etrie Alg\'ebrique, Sch\'emas en Groupes}. 
\newblock Lecture Notes in Mathematics, 153. Springer-Verlag, 1962/64.

\bibitem{Deninger1991}
C.~Deninger.
\newblock {\em On the {$\Gamma$}-factors attached to motives}.
\newblock Invent. Math., (104): 245--261, 1991.

\bibitem{Deninger1992}
C.~Deninger.
\newblock {\em Local {$L$}-factors of motives and regularized determinants}.
\newblock Invent. Math., (107):135--150, 1992.

\bibitem{Deninger1994}
C.~Deninger.
\newblock {\em Motivic {$L$}-functions and regularized determinants}.
\newblock In Motives ({S}eattle, {WA}, 1991), vol.\ 55 of Proc. Sympos. Pure Math., 707--743, 1994.

\bibitem{Durov2007}
N.~Durov.
\newblock {\em A New Approach to Arakelov Geometry}.
\newblock PhD Thesis, available at arXiv: 0704.2030v1 [math.AG], 2007.

\bibitem{Fresan2009}
J.~Fres\'an.
\newblock {\em Le corps \`a un \'el\'ement (d'apr\`es Durov)}.
\newblock Master thesis, available at arXiv: 0908.4059v1 [math.NT], 2009.
 
\bibitem{Fulton1993}
W.~Fulton.
\newblock {\em Introduction to toric varieties}. 
\newblock Princeton University Press, 1993.
 
\bibitem{Habiro2004}
K.~Habiro.
\newblock {\em Cyclotomic completions of polynomial rings}.
\newblock Publ. RIMS Kyoto Univ., 40 (2004) 1127--1146.
 
\bibitem{Haran2007}
M. J.~Shai Haran.
\newblock {\em Non-additive geometry}.
\newblock Compositio Math. Vol.143 (2007) 618--688.

\bibitem{KapranovUN}
M.~Kapranov, A.~Smirnov.
\newblock {\em Cohomology determinants and reciprocity laws: number field case}.
\newblock  Unpublished preprint.

\bibitem{Kato1994}
K.~Kato.
\newblock {\em Toric singularities}.
\newblock Amer. J. Math., Vol. 116, No. 5, (1994), 1073--1099.
 
\bibitem{Kurokawa1992}
N.~Kurokawa.
\newblock{\em Multiple zeta functions: an example}.
\newblock In {\em Zeta functions in geometry ({T}okyo, 1990)}, vol.\ 21 of {\em Adv. Stud. Pure Math.}, 219--226, 1992.

\bibitem{Kurokawa2002}
N.~Kurokawa, H.~Ochiai, M.~Wakayama.
\newblock {\em Absolute derivations and zeta functions}.
\newblock Doc. Math. Extra Volume: Kazuya Kato's Fiftieth Birthday (2003), 565--584.

\bibitem{Kurokawa2005}
N.~Kurokawa.
\newblock {\em Zeta functions over ${\mathbb F}\sb 1$}.
\newblock Proc. Japan Acad. Ser. A Math. Sci. 81 (2005), no. 10, 180--184. 

\bibitem{LL}
J.~L\'opez Pe\~na, O.~Lorscheid.
\newblock {\em Torified varieties and their geometries over {${\bf F}\sb 1$}}.
\newblock arXiv:0903.2173v2 [math.AG], 2009.

\bibitem{Lorscheid2009}
O.~Lorscheid.
\newblock {\em Algebraic groups over the field with one element}.
\newblock arXiv:0907.3824v1 [math.AG], 2009.

\bibitem{MacLane1998}
S.~Mac Lane.
\newblock {\em Categories for the Working Mathematician}. Second Edition.
\newblock Springer-Verlag, 1998.

\bibitem{Maillot2000}
V.~Maillot.
\newblock {\em G\'eom\'etrie d'Arakelov des vari\'et\'es toriques et fibr\'es en droites int\'egrables}.
\newblock M\'em. Soc. Math. Fr. (N.S.) No. 80, 2000.

\bibitem{Manin1995}
Y.~Manin.
\newblock {\em Lectures on zeta functions and motives (according to {D}eninger and {K}urokawa)}.
\newblock  Astrisque  No. 228  (1995), 4, 121--163. 

\bibitem{Manin2008}
Y.~Manin.
\newblock {\em Cyclotomy and analytic geometry over $\Fun$}.
\newblock arXiv:0809.1564v1 [math.AG], 2008.

\bibitem{Marcolli2009}
M.~Marcolli.
\newblock {\em Cyclotomy and endomotives}.
\newblock arXiv: 0901.3167v1 [math.QA], 2009.

\bibitem{Marty2007}
F.~Marty.
\newblock {\em Relative {Z}ariski open objects}.
\newblock  arXiv:0712.3676v3 [math.AG], 2007. 

\bibitem{Marty2009}
F.~Marty.
\newblock {\em Smoothness in relative geometry}.
\newblock  arXiv:0812.1152v1 [math.AG], 2009. 

%\bibitem{Smirnov1993}
%A.~Smirnov.
%\newblock{\em Hurwitz inequalities for number fields}
%\newblock St. Petersburg Math. J.  4  (1993),  357--375.

\bibitem{Oda1988}
T.~Oda.
\newblock {\em Convex Bodies and Algebraic Geometry}.
\newblock Springer-verlag, New York, 1988. 

\bibitem{Soule2004}
C.~Soul\'{e}.
\newblock {\em Les vari\'{e}t\'{e}s sur le corps \`{a} un \'{e}l\'{e}ment}.
\newblock Mosc. Math. J. 4 (2004), 217--244.

\bibitem{Soule2009}
C.~Soul\'{e}.
\newblock {\em Lectures on algebraic varieties over $\Fun$}.
\newblock In this volume.

\bibitem{Steinberg1951}
R.~Steinberg.
\newblock {\em A geometric approach to the representations of the full linear group over a Galois field}.
\newblock Trans. Am. Math. Soc. 71 (1951) 274--282.

\bibitem{Street1972}
R.~Street.
\newblock {\em The formal theory of monads}.
\newblock J. Pure Appl. Alg. 2 (1972), 149--168.

\bibitem{Tits1957}
J.~Tits.
\newblock {\em Sur les analogues alg\'ebriques des groupes semi-simples complexes}.
\newblock Colloque d'alg\`ebre sup\'erieure, tenu \`a Bruxelles du 19 au 22 d\'ecembre 1956 (1957), 261--289. 

\bibitem{Toen2008}
B.~To\"en and M.~Vaqui\'e.
\newblock {\em Au-dessous de $Spec\; \mathbb{Z}$}.
\newblock Journal of K-Theory (2008) 1--64.

\end{thebibliography}
\end{document}